\newtheorem{theorem}{Theorem}[section]
\newtheorem{lemma}[theorem]{Lemma}
\newtheorem{corollary}[theorem]{Corollary}
\theoremstyle{definition}
\newtheorem{examples}[theorem]{Examples}
\newtheorem{remark}[theorem]{Remark}
\newtheorem{remarks}[theorem]{Remarks}
\def\bT{\mathbb{T}}
\def\R{\mathbb{R}}
\def\N{\mathbb{N}}
\def\E{\mathcal{E}}
\def\cC{{\mathcal C}}
\def\cL{{\mathcal L}}
\def\cM{{\mathcal M}}
\def\Lip{{\mathcal Lip}}
\def\GP{{\mathcal GP}}
\newcommand{\dom}[1]{\mathsf{D}_{#1}}
\def\trXY1-tq{Tr(X,Y, 1-\theta ,q)}
\definecolor{darkred}{rgb}{0.7,0.1,0.1}
\newcommand{\vanish}[1]{\relax}
\title[Interpolation of nonlinear order preserving operators]{Interpolation of nonlinear positive or order preserving operators on Banach lattices}
\author{Ralph Chill}
\address{R.~Chill, Institut f\"ur Analysis, Fakult\"at f\"ur Mathematik, TU Dresden, 01062 Dresden, Germany}
\email{ralph.chill@tu-dresden.de}
\author{Alberto Fiorenza}
\address{A.~Fiorenza, Dipartimento di Architettura - 
Universit\'a di Napoli, Via Monteoliveto, 3,
I-80134 Napoli, Italy\\
and Istituto per le Applicazioni del Calcolo
"Mauro Picone", sezione di Napoli \\
Consiglio Nazionale delle Ricerche \\
via Pietro Castellino, 111 \\
 I-80131 Napoli, Italy }
\email{fiorenza@unina.it} 
\author{Sebastian Kr\'ol}
\address{S. Kr\'ol, Faculty of Mathematics and Computer Science, Nicolaus Copernicus University, ul. Chopina 12/18, 87-100 Toru\'n, Poland}
\email{sebastian.krol@mat.umk.pl}
\thanks{The first author is grateful for support by the Deutsche Forschungsgemeinschaft (grant CH 1285/5-1, Order preserving operators in problems of optimal control and in the theory of partial differential equations). The third author was partially supported by the Alexander von Humboldt Foundation and Narodowe Centrum Nauki grant DEC-2011/03/B/ST1/00407}
\begin{document}

\date{\today}

\keywords{nonlinear interpolation, exact interpolation spaces, order preserving operators, positive Gagliardo-Peetre operators, normal spaces}

\subjclass{46D05}  

\begin{abstract} 
We study the relationship between exact interpolation spaces for positive, linear operators, for order preserving, Lipschitz continuous operators, and for positive Gagliardo-Peetre operators, and exact partially $K$-monotone spaces in interpolation couples of compatible Banach lattices. By general Banach lattice theory we recover a characterisation of exact interpolation spaces for order preserving, Lipschitz continuous operators in the couple $(L^1,L^\infty )$ due to B\'enilan and Crandall. 
\end{abstract}

\renewcommand{\subjclassname}{\textup{2010} Mathematics Subject Classification}

\maketitle

\section{Introduction}
 
Motivated by the study of nonlinear semigroups acting on the whole scale of $L^p$ spaces, B\'enilan and Crandall in their article \cite{BeCr91} on completely accretive operators introduced the class of {\em normal spaces}. These spaces can be described as the class of intermediate spaces $X$ between $L^1$ and $L^\infty$ such that every {\em order preserving}, possibly nonlinear operator $S:L^1 + L^\infty \supseteq \dom{S}\to L^1 + L^\infty$ which is contractive both with respect to the norms in $L^1$ and $L^\infty$, and whose domain $\dom{S}$ satisfies an additional compatibility condition with respect to the order structure, is contractive also with respect to the norm in $X$. In other words, normal spaces are the exact interpolation spaces for admissible, order preserving, Lipschitz continuous operators on the interpolation couple $(L^1,L^\infty )$. 

In this note we study the problem of understanding exact interpolation spaces for order preserving, Lipschitz continuous operators (and other operators) in general interpolation couples of Banach lattices. An important ingredient in the approach by B\'enilan and Crandall, which is a variant of the Riesz interpolation theorem due to Brezis and Strauss \cite{BrSt73}, and which uses basic properties of integrals, is here not available. Any proof can only rely on the general theory of Banach lattices. For us, a particular emphasis lies in the identification of {\em exact} interpolation spaces for three classes of operators, namely for positive, linear operators, for order preserving, Lipschitz continuous operators, and for positive Gagliardo-Peetre operators (see Section 4 for the definition of these operators). ``Exact'' interpolation space means here that operator norms of admissible operators in the interpolation space are less than or equal to the respective operator norms in the endpoint spaces of the 
interpolation couple. Besides the various exact interpolation spaces we further introduce the notion of exact partially $K$-monotone spaces; see Section 2. As a corollary to our abstract results we obtain the following theorem which extends the result by B\'enilan and Crandall mentioned above: it says that in the case of the interpolation couple $(L^1,L^\infty )$ over a $\sigma$-finite measure space, the various classes of exact interpolation spaces, the class of exact partially $K$-monotone spaces and the class of normal spaces coincide. 

\begin{theorem} \label{thm.main.l1.linfty}
 Let $(\Omega , \mu)$ be a $\sigma$-finite measure space, and let $X$ be an intermediate space for the interpolation couple $(L^1 (\Omega ),L^\infty (\Omega ))$. Then the following are equivalent:
 \begin{itemize}
  \item[(i)]  The space $X$ is an exact partially monotone space ($=$ normal space in the terminology of B\'enilan and Crandall).
  \item[(ii)] The space $X$ is an exact partially $K$-monotone space. 
  \item[(iii)] The space $X$ is an exact interpolation space for positive Gagliardo-Peetre operators. 
  \item[(iv)] The space $X$ is an exact interpolation space for (admissible) order preserving, Lipschitz continuous operators with solid lattice domain.
  \item[(v)] The space $X$ is an exact interpolation space for (admissible) positive, linear operators.
 \end{itemize}
\end{theorem}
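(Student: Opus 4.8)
The plan is to deduce the theorem from the abstract characterisations established earlier for general couples of Banach lattices, isolating the two places where the specific structure of $(L^1,L^\infty)$ is genuinely needed. For a general couple, the larger the class of admissible operators, the more restrictive it is for $X$ to be an exact interpolation space; since every positive linear contraction is an order preserving Lipschitz contraction on a solid domain, and the latter are subsumed by the positive Gagliardo--Peetre class, passing to exact interpolation spaces reverses these inclusions and yields (iii) $\Rightarrow$ (iv) $\Rightarrow$ (v). In parallel, the abstract theory identifies exact interpolation spaces for positive Gagliardo--Peetre operators with exact partially $K$-monotone spaces, giving (ii) $\Leftrightarrow$ (iii). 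Finally, because the partial monotonicity relation is contained in the partial $K$-monotonicity relation (domination in the lattice order forces domination of the $K$-functionals), requiring the norm inequality for the larger family of pairs is the stronger demand, so (ii) $\Rightarrow$ (i). At this stage (ii) and (iii) are equivalent, both imply (iv), (v), and (i), and it remains only to go back up.

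Closing the cycle is where I would use that the couple is $(L^1,L^\infty)$ over a $\sigma$-finite measure space. First I would prove (v) $\Rightarrow$ (ii). Here the explicit formula $K(t,f;L^1,L^\infty)=\int_0^t f^*(s)\,\ud s$ for the $K$-functional, combined with Calder\'on's construction, is decisive: given a pair with $g$ partially $K$-dominated by $f$, one builds a \emph{positive} linear operator that is simultaneously contractive on $L^1$ and on $L^\infty$ and carries $f$ to $g$ (or to a function dominating $g$ in the lattice order). Applying (v) to this operator yields $\|g\|_X\le\|f\|_X$, which is exactly exact partial $K$-monotonicity. Second I would prove (i) $\Rightarrow$ (ii) by showing that in $(L^1,L^\infty)$ the two dominance relations in fact coincide: the B\'enilan--Crandall relation, expressed through $\int_\Omega (g-k)^+\,\ud\mu\le\int_\Omega (f-k)^+\,\ud\mu$ together with its reflection for all $k$, is equivalent to partial $K$-domination via the rearrangement identity above. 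This upgrades exact partial monotonicity to exact partial $K$-monotonicity and completes the equivalences.

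I expect the main obstacle to be the construction in (v) $\Rightarrow$ (ii): producing a single positive linear contraction on both endpoints that realises the prescribed partial $K$-domination while respecting the cone structure and functioning over an arbitrary $\sigma$-finite, hence possibly mixed atomic and non-atomic, measure space. This is the positive, partial analogue of the Calder\'on--Mityagin transfer, and the delicate point is checking that the averaging and substitution operators furnished by the rearrangement can be chosen order preserving and norm contractive on $L^1$ and $L^\infty$ at once. The rearrangement bookkeeping in (i) $\Rightarrow$ (ii) is secondary but still requires matching the signed, order-sensitive B\'enilan--Crandall conditions with the partial $K$-functional, rather than with the symmetric Hardy--Littlewood--P\'olya relation based on $|f|$.
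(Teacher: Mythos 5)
Your proposal follows essentially the same route as the paper: the chain (ii)$\Leftrightarrow$(iii)$\Rightarrow$(iv)$\Rightarrow$(v) is the abstract Theorem \ref{thm.gp}, the closing implication (v)$\Rightarrow$(ii) is Theorem \ref{lem.sep-k-monotone.exact}(b) applied via the Calder\'on--Ryff theorem (Example \ref{ex.calderon.mityagin}(a)), and (i)$\Leftrightarrow$(ii) is exactly the identity $f^{**}(t)=\frac{1}{t}K(f,t)$ on $\sigma$-finite measure spaces, which makes $\ll$ and $\ll_K$ coincide. The one point you flag as the main obstacle is resolved in the paper not by a single Calder\'on operator carrying $f$ to $g$, but by producing two positive contractions $T^{\pm}$ with $T^{\pm}f^{\pm}=g^{\pm}$ and gluing them with the band projections onto the bands generated by $f^+$ and $g^+$; this is where the $\sigma$-Dedekind completeness and the diagonal-contraction hypothesis (3) of Theorem \ref{lem.sep-k-monotone.exact} enter.
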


Our abstract results actually say that the above theorem remains true in a more general class of interpolation couples of compatible Banach lattices in which a variant of the Calder\'on-Ryff theorem remains ture, and that the implications (ii)$\Leftrightarrow$(iii)$\Rightarrow$(iv)$\Rightarrow$(v) are true in all interpolation couples of compatible Banach lattices; see Theorem \ref{thm.gp}. 

We emphasize that the question of being an {\em exact} interpolation space is also a question of the underlying norm of the space. We prove that the interpolation spaces for positive, linear operators and the interpolation spaces for bounded, linear operators (both not necessarily exact) coincide for all interpolation couples of compatible Banach lattices; see Lemma 2.2.  This may be somewhat surprising since not every admissible operator on the endpoint spaces need be regular, that is, it need not be the difference of two positive operators. And in fact, as Lozanovskii's example shows it does not hold for general interpolation couples of Banach lattices; cf. \cite{Lo72}, \cite{Ber81}, and \cite{Sh81}. 
We point out that the fact that some interpolation spaces for positive, linear operators and the interpolation spaces for bounded, linear operators coincide is known to hold in certain interpolation couples of ordered Banach spaces, too, namely in the case of noncommutative $L^1$ and $L^\infty$ spaces; see Veselova, Sukochev and Tikhonov \cite[Theorem 5.1]{VeSuTi07}. Concerning {\em exact} interpolation spaces, however, we give easy examples of interpolation spaces of interpolation couples of Banach lattices which are exact for positive, linear operators but which are not exact for bounded, linear operators; see Remarks \ref{examples} and Lemma \ref{lem.wishful}(d).    

This note is of course related to classical topics in interpolation theory, for example to the identification of exact interpolation spaces for bounded, linear operators; we refer to the monographs by Bergh and L\"ofstr\"om \cite{BeLo76}, Bennett and Sharpley \cite{BeSh88} or Brudnyi and Krugljak \cite{BrKr91}. It is also related to the topic of interpolation of Lipschitz continuous operators which goes back to Orlicz \cite{Or54}, Lorentz and Shimogaki \cite{LoSh68}, Browder \cite{Br69} and others. We mention that there are important, recent extensions of the theory, going beyond the case of Lipschitz continuous operators, due to Coulhon and Hauer \cite[Section 4]{CoHa18}. To the best of our knowledge, it seems however that the interpolation of order preserving or positive operators is much less studied and that a characterisation of the exact interpolation spaces is missing in the context of Banach lattices. We hope to make a step towards a better understanding of this part of interpolation theory.

\section{Partially $K$-monotone intermediate spaces of interpolation couples of Banach lattices}

We recall a few known notions from interpolation theory. Let $\vec{X} = (X_0,X_1)$ be an interpolation couple of Banach spaces. We equip the spaces $X_0\cap X_1$ and $X_0 +X_1$ with the respective norms
\begin{align*}
 \| f\|_{X_0\cap X_1} & := \| f\|_{X_0} + \| f\|_{X_1} \quad \text{and} \\
 \| f\|_{X_0+X_1} & := \inf \{ \| f_0 \|_{X_0} + \| f_1\|_{X_1} : f_i\in X_i \text{ and } f=f_0+f_1 \} ,
\end{align*}
so that both spaces are Banach spaces. The $K$-functional from interpolation theory is given by
 \[
  K(f,t) = \inf \{ \| f_0\|_{X_0} + t\, \| f_1\|_{X_1} : f=f_0 + f_1\} 
 \]
for $f\in X_0+X_1$ and $t\in (0,\infty )$. With the help of the $K$-functional we introduce the relation $\preceq_K$ on $X_0+X_1$ by setting 
\[
g\preceq_K f \quad : \Leftrightarrow \quad K(g,\cdot )\leq K(f,\cdot ) \text{ pointwise everywhere on } (0,\infty ). 
\]
Note that $\preceq_K$ is not an order relation, since $g\preceq_K f$ and $f\preceq_K g$ does not imply $g=f$. An {\em intermediate space} of the couple $\vec{X}$ is a Banach space $X$ such that $X_0\cap X_1 \subseteq X \subseteq X_0+X_1$ with continuous embeddings. An intermediate space $X$ is {\em $K$-monotone} if there exists a constant $C\geq 1$ such that, for every $f\in X$ and every $g\in X_0+X_1$, 
\[
 g\preceq_K f \quad \Rightarrow \quad g\in X \text{ and } \| g\|_X \leq C\, \| f\|_X ,
\]
and it is {\em exactly $K$-monotone} if this implication holds true with $C=1$.

We say that an interpolation couple $\vec{X} = (X_0,X_1)$ of Banach spaces is an interpolation couple of compatible Banach lattices if $X_0$, $X_1$ and $X_0+X_1$ are Banach lattices, and if $X_0$ and $X_1$ are order ideals in $X_0+X_1$. The following auxiliary lemma contains some technical observations on decompositions of elements in Banach lattices and on the $K$-functional on interpolation couples of compatible Banach lattices. 

\begin{lemma} \label{lem.basic}
 Let $\vec{X} = (X_0,X_1)$ be an interpolation couple of compatible Banach lattices. Then: 
\begin{itemize}
 \item[(a)] For every $f\in X_0+X_1$ and every $f_i\in X_i$ with $f=f_0+f_1$ there exist $\hat{f}_i\in X_i$ such that 
 \[
 0\leq \hat{f}_i^\pm \leq f_i^\pm \text{ and } f^\pm = \hat{f}_0^\pm +\hat{f}_1^\pm . 
 \]
 In particular, $f= \hat{f}_0+\hat{f}_1$ and $|f| = |\hat{f}_0| + |\hat{f}_1|$. 
 \item[(b)] For every $f\in X_0+X_1$ and every $g_i\in X_i^+$ with $|f| = g_0+g_1$ there exists $f_i\in X_i$ such that $f= f_0+f_1$ and $|f_i| = g_i$.
 \item[(c)] For every $f\in X_0+X_1$ and every $t\in (0,\infty )$,
\[
 K(f,t) = \inf \{ \| g_0\|_{X_0} + t \, \| g_1\|_{X_1} : g_i\in X_i^+ \text{ and } |f| = g_0 +g_1 \} = K(|f|,t) .
\]
In particular, if $f$, $g\in X_0+X_1$ and $|g|\leq |f|$, then $K(g,\cdot ) \leq K(f,\cdot )$ pointwise everywhere on $(0,\infty )$, that is, $g\preceq_K f$. 
\end{itemize}
\end{lemma}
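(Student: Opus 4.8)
The plan is to prove the three parts in order, using each to feed the next. For part (a), given a decomposition $f = f_0 + f_1$ with $f_i \in X_i$, I would set $\hat f_i^+ := f_i^+ \wedge f^+$ and $\hat f_i^- := f_i^- \wedge f^-$, or a suitable variant of these. The idea is that the positive and negative parts of $f$ must be reconstructible from the corresponding parts of $f_0$ and $f_1$ after truncation. Since $X_i$ is an order ideal in $X_0 + X_1$ and $0 \le \hat f_i^\pm \le f_i^\pm$, each $\hat f_i^\pm$ lies in $X_i$. The crux is the identity $f^\pm = \hat f_0^\pm + \hat f_1^\pm$; I would verify this using the Riesz decomposition property valid in any vector lattice, namely that if $0 \le a \le b + c$ with $b,c \ge 0$ then $a = a_1 + a_2$ with $0 \le a_1 \le b$ and $0 \le a_2 \le c$. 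Applying this to $f^+ \le f_0^+ + f_1^+$ (which holds since $f^+ = (f_0+f_1)^+ \le f_0^+ + f_1^+$) yields the desired $\hat f_0^+, \hat f_1^+$, and symmetrically for $f^-$; then $\hat f_i := \hat f_i^+ - \hat f_i^-$ works, with $|\hat f_i| = |\hat f_i^+| + |\hat f_i^-|$ disjoint, and $|f| = |\hat f_0| + |\hat f_1|$ following by summing.

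For part (b), the situation is reversed: I am given $|f| = g_0 + g_1$ with $g_i \in X_i^+$ and must split $f$ itself. The natural device is to multiply by the sign of $f$. Concretely I would seek $f_i$ with $|f_i| = g_i$ and $f_i$ having ``the same sign pattern'' as $f$. In a Banach lattice one cannot literally multiply by a sign, so I would instead apply part (a) to any decomposition of $f$ (for instance $f = f + 0$ is not allowed since $0 \notin X_0$ in general, but one starts from an arbitrary admissible decomposition $f = u_0 + u_1$) to reduce to disjointly supported positive and negative parts, and then distribute $g_0, g_1$ across the positive and negative parts of $f$ proportionally. More cleanly, I expect to define $f_i := g_i$ on the set where $f \ge 0$ and $f_i := -g_i$ where $f < 0$; abstractly this is realized via the band projections or via the formula $f_i := g_i \wedge f^+ - g_i \wedge f^-$ after checking $g_i \le |f|$. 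One then confirms $f_0 + f_1 = f$ and $|f_i| = g_i$.

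For part (c), the equality of the two infima and $K(f,t) = K(|f|,t)$ is where (a) and (b) combine. The inequality $K(|f|,t) \le K(f,t)$ together with $K(f,t) \le$ (restricted infimum) is one direction: any positive decomposition $|f| = g_0 + g_1$ produces via (b) a decomposition $f = f_0 + f_1$ with $\|f_i\|_{X_i} = \|g_i\|_{X_i}$ (using that the lattice norm satisfies $\|f_i\|_{X_i} = \||f_i|\|_{X_i} = \|g_i\|_{X_i}$), showing $K(f,t)$ is bounded by the restricted infimum. Conversely, part (a) shows that any decomposition $f = f_0 + f_1$ can be replaced by one with $|f| = |\hat f_0| + |\hat f_1|$ and $\|\hat f_i\|_{X_i} \le \|f_i\|_{X_i}$ by the ideal/lattice norm property, so the unrestricted infimum already equals the restricted one and equals $K(|f|,t)$. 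The final ``in particular'' clause then follows because $|g| \le |f|$ combined with part (c) reduces matters to comparing $K$-functionals of positive elements, where domination of decompositions is immediate.

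The main obstacle I anticipate is part (b): reconstructing a signed decomposition of $f$ from a positive decomposition of $|f|$ requires genuine care in an abstract Banach lattice, since there is no pointwise multiplication by $\sgn f$. The key realization is that the band generated by $f^+$ and the band generated by $f^-$ give disjoint projections, and $g_i$ must be split along these two bands; verifying that the resulting pieces lie in $X_i$ (again using the ideal property $0 \le g_i \wedge f^\pm \le g_i \in X_i$) and that they sum correctly is the technical heart. Everything else reduces to the Riesz decomposition property and the fact that lattice norms are monotone, $\||h|\|_X = \|h\|_X$, which hold by the standing hypotheses on compatible Banach lattices.
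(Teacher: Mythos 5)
Your proposal is correct, and parts (b) and (c) follow the paper's own proof essentially verbatim: you arrive at the same formula $f_i = g_i\wedge f^+ - g_i\wedge f^-$ for (b), and your chain for (c) --- unrestricted infimum $\geq$ restricted positive infimum $\geq K(|f|,t)$ via (a), and $K(|f|,t)\geq K(f,t)$ via (b) --- is exactly the paper's. The genuine difference is in (a). The paper constructs $\hat f_i$ explicitly by subtracting the correction term $h = f_0^+\wedge f_1^- - f_0^-\wedge f_1^+$ from $f_0$ and adding it to $f_1$, and then identifies $\hat f_i^\pm$ through the uniqueness of the representation of an element as a difference of disjoint positive elements, together with some disjointness bookkeeping. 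You instead apply the Riesz decomposition property to $f^+\leq f_0^+ + f_1^+$ and $f^-\leq f_0^- + f_1^-$ to obtain $a_i, b_i$ with $0\leq a_i\leq f_i^+$, $0\leq b_i\leq f_i^-$, $f^+=a_0+a_1$, $f^-=b_0+b_1$, and set $\hat f_i = a_i - b_i$. This works and is arguably shorter: the two small checks you should make explicit are that $a_i\wedge b_i\leq f_i^+\wedge f_i^- = 0$, so that $\hat f_i^+ = a_i$ and $\hat f_i^- = b_i$ really are the positive and negative parts, and that $a_i, b_i\in X_i$ by the order ideal property. (Your first guess $\hat f_i^\pm := f_i^\pm\wedge f^\pm$ is not the one to rely on; the Riesz decomposition route you pivot to is the one that closes the argument, since it delivers the identity $f^\pm = \hat f_0^\pm + \hat f_1^\pm$ by construction rather than requiring a verification.) What the paper's version buys in exchange for the extra bookkeeping is a completely explicit formula for the corrected decomposition; what yours buys is economy, at the cost of invoking Riesz decomposition as a black box.
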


\begin{proof}
\noindent (a) Let $f\in X_0+X_1$ and $f_i\in X_i$ such that $f=f_0+f_1$. Let $h^+ = f_0^+ \wedge f_1^-$ and $h^- = f_0^- \wedge f_1^+$. Since $X_0$ and $X_1$ are ideals in $X_0+X_1$, then $h^+$, $h^-\in X_0\cap X_1$, and clearly $h^+$, $h^- \geq 0$ and $h^+ \wedge h^- = 0$. Let $h=h^+-h^-$, and set $\hat{f}_0 = f_0 -h \in X_0$ and $\hat{f}_1 := f_1+h\in X_1$. Then $f = \hat{f}_0 + \hat{f}_1$. Note that 
\begin{align*}
 & 0\leq f_0^+ - h^+ \leq f_0^+ , \\
 & 0\leq f_0^- - h^- \leq f_0^- , \\
 & 0\leq f_1^+ - h^- \leq f_1^+ , \text{ and} \\
 & 0\leq f_1^- - h^+ \leq f_1^- .
\end{align*}
Since by \cite[Theorem 1.1.1 (iv), p. 3]{MN91} the representation of an element in a Banach lattice as a difference of two disjoint, positive elements is unique, $\hat{f}_0^+ = f_0^+ - h^+$, $\hat{f}_0^- = f_0^- -h^-$, $\hat{f}_1^+ = f_1^+ -h^-$ and $\hat{f}_1^- = f_1^- - h^+$. Moreover, $\hat{f}_0^+ \wedge \hat{f}_1^- = 0$ and $\hat{f}_0^- \wedge \hat{f}_1^+ = 0$. Since, in addition, $\hat{f}_0^+ \wedge \hat{f}_0^- = 0$ and $\hat{f}_1^+ \wedge \hat{f}_1^- = 0$, we deduce $\hat{f}_0^+ \wedge (\hat{f}_0^- + \hat{f}_1^-) = 0$ and $\hat{f}_1^+ \wedge (\hat{f}_0^- + \hat{f}_1^-) = 0$, and therefore $(\hat{f}_0^+ +\hat{f}_1^+) \wedge (\hat{f}_0^- + \hat{f}_1^-) = 0$. Hence, by \cite[Theorem 1.1.1 (iv), p. 3]{MN91} again,
\begin{align*}
 f^+ & = (\hat{f}_0 + \hat{f}_1 )^+  = \hat{f}_0^+ + \hat{f}_1^+ ,
\end{align*}
and similarly $f^- = \hat{f}_0^- + \hat{f}_1^-$. We have proved the first part of the statement. The rest is a straightforward consequence of this first part.

\noindent (b) Set $f_i = g_i \wedge f^+ - g_i\wedge f^-$. 

\noindent (c) Let $f\in X_0+X_1$ and $t>0$. On the one hand, by definition of the $K$-functional, since $X_0$ and $X_1$ are Banach lattices, and by properties of the infimum, 
\begin{align*}
 K(f,t) & = \inf \{ \| f_0\|_{X_0} + t \| f_1\|_{X_1} : f_i\in X_i \text{ and } f=f_0 + f_1\} && \\
 & = \inf \{ \| \, |f_0|\, \|_{X_0} + t \| \, |f_1|\, \|_{X_1} : f_i\in X_i \text{ and } f=f_0 + f_1\} && \\
 & \geq  \inf \{ \| g_0\|_{X_0} + t \, \| g_1\|_{X_1} : g_i\in X_i^+ \text{ and } |f| = g_0 +g_1 \} && \text{(by part (a))} \\
 & \geq  \inf \{ \| g_0\|_{X_0} + t \, \| g_1\|_{X_1} : g_i\in X_i \text{ and } |f| = g_0 +g_1 \} && \\
 & = K(|f|,t). &&
\end{align*}
On the other hand,  
\begin{align*}
K(|f|,t) & = \inf \{ \| g_0\|_{X_0} + t \, \| g_1\|_{X_1} : g_i\in X_i^+ \text{ and } |f| = g_0 +g_1 \} && \text{(by part (a))} \\
  & \geq \inf \{ \| f_0\|_{X_0} + t \| f_1\|_{X_1} : f_i\in X_i \text{ and } f=f_0 + f_1\} && \text{(by part (b))} \\
  & = K(f,t) .
\end{align*}
\end{proof}

In an interpolation couple $\vec{X} = (X_0,X_1)$ of compatible Banach lattices, we introduce another relation on $X_0+X_1$, denoted by $\ll_K$, by setting
\begin{align*}
g \ll_K f \quad & : \Leftrightarrow  \quad K(g^\pm , \cdot ) \leq K (f^\pm , \cdot ) \text{ pointwise everywhere on } (0,\infty ) \\
& \phantom{:} \Leftrightarrow \quad g^+ \preceq_K f^+ \text{ and } g^- \preceq_K f^- . 
\end{align*}
It is important to note that the two relations $\preceq_K$ and $\ll_K$ coincide on the positive cone of $X_0+X_1$, but they are in general not equal. 
An intermediate space $X$ is {\em partially $K$-monotone} if there exists a constant $C\geq 1$ such that, for every $f\in X$ and every $g\in X_0+X_1$ 
\[
  g\ll_K f \quad \Rightarrow \quad g\in X \text{ and } \| g\|_X \leq C\, \| f\|_X ,
\]
and it is {\em exactly partially $K$-monotone} if this implication holds true with $C=1$.

\begin{lemma} \label{lem.wishful}
Let $\vec{X} = (X_0,X_1)$ be an interpolation couple of compatible Banach lattices, and let $X$ be an intermediate space. Then:
\begin{itemize}
\item[(a)] For every $f$, $g\in X_0+X_1$, $g\ll_K f$ implies $g\preceq_K 2f$. In particular, if $X$ is $K$-monotone, then $X$ is partially $K$-monotone. 
\item[(b)] If $X$ is an exact $K$-monotone space, then $X$ is a Banach lattice and an order ideal in $X_0+X_1$. 
\item[(c)] If $X$ is an exact partially $K$-monotone space, then $X$ is a vector lattice and an order ideal in $X_0+X_1$, and for every $f\in X$ 
\begin{equation} \label{eq.norm.equivalence}
 \frac12 \,\| \, |f|\, \|_X  \leq \| f\|_X \leq 2\, \| \, |f|\, \|_X .
\end{equation}
The norm $\| f\|_* := \| \, |f|\, \|_X$ ($f\in X$) is an equivalent norm for which $X$ becomes exactly $K$-monotone. 
\item[(d)] If $X$ is an exact partially $K$-monotone space, then the following are equivalent:
\begin{itemize}
\item[(i)] The space $X$ is an exact $K$-monotone space. 
\item[(ii)] The space $X$ is a Banach lattice.
\item[(iii)] For every $f\in X$ one has $\| f\|_X = \|\, |f|\, \|_X$. 
\end{itemize}
\end{itemize}
\end{lemma}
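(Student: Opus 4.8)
The plan is to derive everything from two facts recorded in Lemma~\ref{lem.basic}: the $K$-functional is subadditive and satisfies $K(f,\cdot)=K(|f|,\cdot)$, and $|g|\le|f|$ implies $g\preceq_K f$. The single observation that drives the whole argument is that for every $f\in X_0+X_1$ one has $f^+\ll_K f$, since $(f^+)^+=f^+\preceq_K f^+$ and $(f^+)^-=0\preceq_K f^-$; note, however, that $f^-\ll_K f$ \emph{fails} in general, and this asymmetry---handled by applying statements also to $-f$, for which $(-f)^+=f^-$---is the main subtlety. For part (a), given $g\ll_K f$ I would estimate, using subadditivity and $K(g,\cdot)=K(|g|,\cdot)$,
\[
 K(g,t)=K(g^++g^-,t)\le K(g^+,t)+K(g^-,t)\le K(f^+,t)+K(f^-,t),
\]
and bound each of the last two terms by $K(|f|,t)$ via $f^\pm\le|f|$ and Lemma~\ref{lem.basic}(c); this yields $K(g,t)\le 2K(|f|,t)=K(2f,t)$, i.e. $g\preceq_K 2f$. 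The ``in particular'' is then immediate: if $X$ is $K$-monotone with constant $C$, then $g\ll_K f$ with $f\in X$ gives $g\preceq_K 2f\in X$, so $g\in X$ with $\|g\|_X\le 2C\|f\|_X$.

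For (b), exact $K$-monotonicity yields the structure directly: $|g|\le|f|$ gives $g\preceq_K f$, hence $g\in X$ with $\|g\|_X\le\|f\|_X$, so $X$ is solid in $X_0+X_1$; applying the implication to $g=|f|$ and back gives $\|\,|f|\,\|_X=\|f\|_X$, so the inherited norm is a lattice norm and $X$ is a Banach lattice. For the structural part of (c), where $X$ is only exactly \emph{partially} $K$-monotone, I would instead use $f^+\ll_K f$ and, applied to $-f$, the relation $f^-=(-f)^+\ll_K(-f)$ to conclude $f^\pm\in X$ with $\|f^\pm\|_X\le\|f\|_X$; hence $|f|=f^++f^-\in X$, so $X$ is closed under $|\cdot|$ and therefore a vector lattice, and solidity follows because $g^\pm\le|g|\le|f|$ forces $g^\pm\ll_K|f|\in X$.

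The two-sided estimate~(\ref{eq.norm.equivalence}) comes from the same inputs: $f^\pm\ll_K f$ gives $\|\,|f|\,\|_X\le\|f^+\|_X+\|f^-\|_X\le 2\|f\|_X$, while $f^\pm\le|f|$ (hence $f^\pm\ll_K|f|$) gives $\|f\|_X\le\|f^+\|_X+\|f^-\|_X\le 2\|\,|f|\,\|_X$. For the renorming I would verify that $\|f\|_*:=\|\,|f|\,\|_X$ is a norm equivalent to $\|\cdot\|_X$---monotonicity on the positive cone, which the triangle inequality needs, again follows from exact partial $K$-monotonicity---and that it is \emph{exactly} $K$-monotone: if $g\preceq_K f$ then $K(|g|,\cdot)=K(g,\cdot)\le K(f,\cdot)=K(|f|,\cdot)$, i.e. $|g|\ll_K|f|$ since both sides are positive, whence $\|\,|g|\,\|_X\le\|\,|f|\,\|_X$ and $g\in X$ by solidity; this is precisely $\|g\|_*\le\|f\|_*$.

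Finally, for (d) I would close the loop (i)$\Rightarrow$(ii)$\Rightarrow$(iii)$\Rightarrow$(i): (i)$\Rightarrow$(ii) is part (b); (ii)$\Rightarrow$(iii) is the standard fact that a lattice norm satisfies $\|f\|_X=\|\,|f|\,\|_X$; and (iii)$\Rightarrow$(i) holds because (iii) states $\|\cdot\|_X=\|\cdot\|_*$, which by (c) is exactly $K$-monotone. I expect the genuine obstacles to be the asymmetry of $\ll_K$ (circumvented by passing to $-f$) and the verification that the renormed space is exactly, rather than merely boundedly, $K$-monotone.
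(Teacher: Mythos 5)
Your proof is correct and follows essentially the same route as the paper's: parts (a)--(c) rest on the same two observations, namely Lemma \ref{lem.basic}(c) together with $f^+\ll_K f$ and a device for handling $f^-$ (the paper uses $-f^-\ll_K f$ where you use $f^-=(-f)^+\ll_K(-f)$; these are interchangeable and both yield $\|f^\pm\|_X\le\|f\|_X$). The one genuine divergence is in (d), (iii)$\Rightarrow$(i): you observe that (iii) identifies $\|\cdot\|_X$ with the renormed $\|\cdot\|_*$, which part (c) has already shown to be exactly $K$-monotone, whereas the paper reruns the $\preceq_K$-to-$\ll_K$ argument from scratch; your deduction is more economical and equally valid. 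Two small remarks: first, you explicitly verify the order-ideal property in (c) (via $g^\pm\le|g|\le|f|$ and $|f|\in X$), a detail the paper leaves implicit; second, in deriving \eqref{eq.norm.equivalence} you write ``$f^\pm\ll_K f$'' after having correctly noted that $f^-\ll_K f$ fails in general --- what you actually use there is the already-established inequality $\|f^\pm\|_X\le\|f\|_X$, so this is only a notational slip, not a gap.
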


\begin{proof}
(a) Let $f$, $g\in X_0+X_1$ be such that $g\ll_K f$, that is $K(g^\pm ,\cdot ) \leq K(f^\pm ,\cdot )$ pointwise everywhere. Then, by Lemma \ref{lem.basic} (c), for every $t\in (0,\infty )$,
\begin{align*}
 K(g,t) & \leq K(g^+ ,t) + K(g^-,t) \\
  & \leq K(f^+,t) + K(f^-,t) \\
  & \leq 2\, K(|f|,t) \\
  & = 2\, K(f,t) , 
\end{align*}
which means $g\preceq_K 2\, f$. From here follows directly the implication that $K$-monotonicity implies partial $K$-monotonicity.

(b) Assume that $X$ is exactly $K$-monotone. Let $f\in X$ and $g\in X_0+X_1$ be such that $|g|\leq |f|$. Then, by Lemma \ref{lem.basic} (c), 
\[
 K(g,\cdot ) = K(|g|,\cdot ) \leq K(|f|,\cdot ) = K(f,\cdot ) , 
\]
so that
\[
 g\preceq_K |g| \preceq_K g \preceq_K f \preceq_K |f| \preceq_K f .
\]
Since $X$ is exactly $K$-monotone, this implies $g$, $|g|$, $|f|\in X$ and 
\[
 \| g\|_X = \| \, |g| \,\|_X \leq \| \, |f|\, \|_X = \| f\|_X .
\]
As a consequence, $X$ is an order ideal in $X_0+X_1$ and a Banach lattice.

 (c) Assume that $X$ is an exact partially $K$-monotone space. By Lemma \ref{lem.basic} (c) again, for every $f\in X_0+X_1$,
\[
 f^+ , \, -f^- \ll_K f \quad \text{ and } \quad f^+ ,\, f^- \ll_K |f| .
\]
Hence, for every $f\in X$ we deduce $f^\pm\in X$ and thus $|f|\in X$ (so that $X$ is a vector lattice), and 
\[
\| f^\pm \|_X \leq \| f\|_X \quad \text{ and } \quad \| f^\pm \|_X \leq \| \, |f|\, \|_X .
\]
As a consequence, 
\begin{align*}
 & \| \, |f|\, \|_X = \| f^+ + f^-\|_X \leq \| f^+ \|_X + \| f^-\|_X \leq 2\, \| f\|_X \quad \text{and} \\
 & \| f \|_X = \| f^+ - f^-\|_X \leq \| f^+ \|_X + \| f^-\|_X \leq 2\, \| \, |f|\, \|_X ,
\end{align*}
which implies the inequalities in \eqref{eq.norm.equivalence}. As a consequence, $\|\cdot \|_*$ is an equivalent norm on $X$, and using that the relations $\preceq_K$ and $\ll_K$ coincide on the positive cone of $X_0+X_1$ it is straightforward to show that $(X,\|\cdot \|_*)$ is an exact $K$-monotone space. 

(d) The implication (i)$\Rightarrow$(ii) follows from assertion (b) above, and the implication (ii)$\Rightarrow$(iii) is true in every Banach lattice. In order to show the implication (iii)$\Rightarrow$(i), assume that $X$ is an exact partially $K$-monotone space and that for every $f\in X$ one has $\| f\|_X = \|\, |f|\,\|_X$. Let $f\in X$ and $g\in X_0+X_1$ be such that $g\preceq_K f$. This relation and Lemma \ref{lem.basic} (c) imply $K(|g|,\cdot ) = K(g,\cdot ) \leq K(f,\cdot ) = K(|f|,\cdot )$, that is, $|g|\preceq_K |f|$. Since for positive elements the relations $\preceq_K$ and $\ll_K$ coincide, and since $X$ is partially $K$-monotone, we deduce $|g|\in X$ and $\| \, |g|\, \|_X \leq \| \, |f|\, \|_X$. In a similar way one shows $g^\pm\in X$, and hence $g\in X$. The norm equality (assumption (iii)) finally yields $\| g\|_X \leq \| f\|_X$. Hence, $X$ is exactly $K$-monotone.  
\end{proof}

\begin{remarks}\label{examples}
(a) In general, the constants $\frac12$ and $2$ in \eqref{eq.norm.equivalence} can not be globally improved. In fact, the space $X = \R^2$ equipped with either of the norms
\[
  N_1 (x_1,x_2) := \begin{cases}
                             |x_1| + |x_2| & \text{if } x_1 x_2 \geq 0 , \\[2mm]
                             \sup \{ |x_1| , |x_2| \} & \text{if } x_1 x_2 < 0 ,
                            \end{cases}
\]
or 
\[
  N_2 (x_1,x_2) := \begin{cases}
                             \sup \{ |x_1| , |x_2| \} & \text{if } x_1 x_2 \geq 0 , \\[2mm]
                             |x_1| + |x_2| & \text{if } x_1 x_2 < 0 ,
                            \end{cases}
\]
(for $(x_1,x_2)\in\R^2$) is an exact partially $K$-monotone intermediate space between $(\R^2 , \|\cdot \|_1 )$ and $(\R^2 , \|\cdot \|_\infty )$. In $(\R^2 , N_1)$ the constant $\frac12$ in the first inequality in \eqref{eq.norm.equivalence} is optimal, while in $(\R^2 , N_2 )$ the constant $2$ in the second inequality in \eqref{eq.norm.equivalence} is optimal; check for example with the vector $(x_1,x_2) = (1,-1)$. Since $N_i ((1,-1)) \not= N_i (|(1,-1)|) = N_i ((1,1))$, the spaces $(\R^2 , N_i)$ are not Banach lattices, and therefore, by Lemma \ref{lem.wishful} (d), the spaces $(\R^2,N_i)$ are not exactly $K$-monotone.  
\begin{figure}[t]
  \centering
  \includegraphics[width=6.2cm]{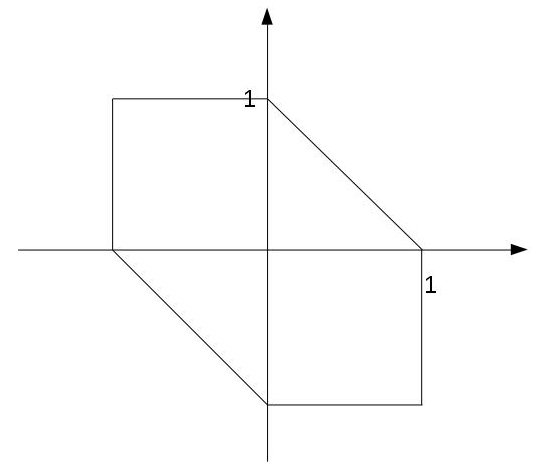}\ 
  \includegraphics[width=6.2cm]{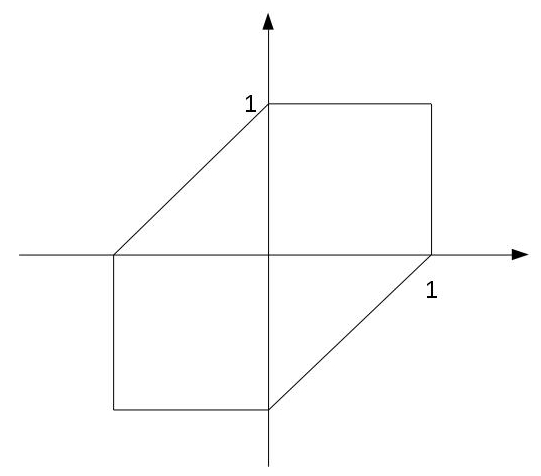} \\
  \caption{Unit balls of the norms $N_1$ (left) and $N_2$ (right) on $\R^2$}
  \label{fig.norms}
\end{figure}

(b) Similarly, the spaces $X=L^1(\Omega )$ with the norm $\|f\|_X:=\max \{ \|f^+\|_{L_1}, \|f^-\|_{L_1} \}$ and $X=L^\infty(\Omega )$ with $\|f\|_{X}:=\|f^+\|_{L^\infty}+\|f^-\|_{L^\infty}$ are exact partially $K$-monotone intermediate spaces of the couple $(L^1(\Omega ), L^\infty (\Omega ))$, and for these spaces the constant $\frac12$ and $2$ in the inequality \eqref{eq.norm.equivalence} are optimal, too.
\end{remarks}

\section{Interpolation of positive, linear operators}

Let $\vec{X} = (X_0,X_1)$ and $\vec{Y} = (Y_0,Y_1)$ be two interpolation couples of Banach spaces. A linear operator $S:X_0+X_1\to Y_0+Y_1$ is an {\em admissible, bounded, linear operator} if $SX_i \subseteq Y_i$ and if the restrictions $S: X_i \to Y_i$ are bounded ($i=0$, $1$). We denote the space of all admissible, bounded, linear operators between both couples by $\cL (\vec{X} , \vec{Y})$; we shortly write $\cL (\vec{X} ) := \cL (\vec{X} , \vec{X} )$. The space $\cL (\vec{X} , \vec{Y})$ is a Banach space when equipped with the norm
\[
 \| S\|_{\cL (\vec{X},\vec{Y})} := \sup \{ \| S\|_{\cL (X_i,Y_i)} : i=0, 1 \} .
\]
An intermediate space $X\subseteq X_0+X_1$ is called an {\em interpolation space} (more precisely, {\em interpolation space for bounded, linear operators}) if every admissible, bounded, linear operator $S\in\cL (\vec{X})$ leaves $X$ invariant, that is, $SX\subseteq X$. By the closed graph theorem, $S$ is then necessarily bounded on $X$. An interpolation space $X$ is {\em exact} if for every admissible, bounded, linear operator $S\in\cL (\vec{X})$
\begin{equation} \label{eq.exact}
 \| S\|_{\cL (X)} \leq \| S\|_{\cL (\vec{X})} .
\end{equation}

If $\vec{X} = (X_0,X_1)$ and $\vec{Y} = (Y_0,Y_1)$ are two interpolation couples of compatible Banach lattices, then we denote by $\cL^+ (\vec{X},\vec{Y})$ the cone of all admissible, {\em positive}, linear operators; recall from \cite[Proposition 1.3.5, p. 27]{MN91} that every positive, linear operator on a Banach lattice is automatically bounded, so that $\cL^+ (\vec{X},\vec{Y})\subseteq \cL (\vec{X},\vec{Y})$. We call an intermediate space $X$ an {\em interpolation space for positive, linear operators} if every admissible, {\em positive}, linear operator $S\in \cL^+ (\vec{X})$ leaves $X$ invariant, and we call it {\em exact} if in addition for these operators the inequality \eqref{eq.exact} holds. Since there are fewer {\em positive}, linear operators than bounded, linear operators, the classes thus defined are a priori larger than the classes of all (exact) interpolation spaces; cf. \cite{Lo72}, \cite{Ber81}, and \cite{Sh81}. If $X$ is an interpolation space for positive, linear operators, then 
\[
 \| f\|_* := \sup \{ \| Tf \|_X : T \in \cL^+ (\vec{X}), \, \| T\|_{\cL (X)} \leq 1\}
\]
defines an equivalent norm for which $X$ becomes an exact interpolation space for positive, linear operators; compare with \cite[Proposition 1.15, p. 105]{BeSh88}, which is the corresponding result for interpolation spaces for bounded, linear operators.  \\

We call an interpolation couple $\vec X = (X_0,X_1)$ of compatible Banach lattices an {\em exact Calder\'on-Mityagin couple  with respect to positive operators} or an {\em exact partially $K$-monotone couple with respect to positive operators}, if for every {\em positive} $f$, $g\in X_0+X_1$ with $K(g,\cdot )\leq K(f,\cdot )$ there exists an admissible, {\em positive}, linear operator $T\in\cL^+ (\vec{X})$ such that $\|T\|_{\cL (\vec{X})}\leq 1$ and $Tf =g$. \\

The following theorem shows the relation between exact partially $K$-monotone intermediate spaces and exact interpolation spaces for positive, linear operators. Before, however, we state an auxiliary lemma. 

\begin{lemma} \label{lem.dedekind}
 Let $\vec{X} = (X_0,X_1)$ be an interpolation couple of compatible Banach lattices. If $X_0$ and $X_1$ are $\sigma$-Dedekind complete, then $X_0+X_1$ is $\sigma$-Dedekind complete. 
\end{lemma}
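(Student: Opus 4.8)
The plan is to reduce the statement to the standard characterisation of $\sigma$-Dedekind completeness in terms of monotone sequences, and then to transport the completeness of $X_0$ and $X_1$ to $X_0+X_1$ by splitting a bounded increasing sequence compatibly with the order ideals $X_0$ and $X_1$. Recall that a vector lattice is $\sigma$-Dedekind complete precisely when every increasing sequence that is bounded above has a supremum; so it suffices to fix an increasing sequence $(f_n)$ in $X_0+X_1$ with $f_n\leq g$ for all $n$ and to produce $\sup_n f_n$. Replacing $f_n$ by $f_n-f_1$ and $g$ by $g-f_1$, I may and do assume $0\leq f_n\uparrow\leq g$ with $g\geq 0$.

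First I would split the upper bound. Since $g\geq 0$, Lemma \ref{lem.basic}(a) yields a decomposition $g=g_0+g_1$ with $g_i\in X_i^+$. The idea is to truncate each $f_n$ against $g_0$: set $a_n:=f_n\wedge g_0$ and $b_n:=f_n-a_n=(f_n-g_0)^+$. Because $X_0$ and $X_1$ are order ideals in $X_0+X_1$, the inequalities $0\leq a_n\leq g_0\in X_0$ and $0\leq b_n\leq g_1\in X_1$ (the latter using $f_n-g_0\leq g_1$ together with $g_1\geq 0$) force $a_n\in X_0$ and $b_n\in X_1$. Monotonicity of the lattice operations gives $a_n\uparrow$ in $X_0$ and $b_n\uparrow$ in $X_1$, and both sequences are order bounded, by $g_0$ and $g_1$ respectively. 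Hence $a:=\sup_n a_n$ exists in $X_0$ and $b:=\sup_n b_n$ exists in $X_1$, by the assumed $\sigma$-Dedekind completeness.

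The delicate point, which I expect to be the main obstacle, is that suprema computed \emph{inside} the ideals $X_0$, $X_1$ need not a priori coincide with suprema computed in the larger space $X_0+X_1$. Here I would invoke the order ideal property once more: if $c\in X_0+X_1$ satisfies $a_n\leq c$ for all $n$, then $c\geq a_1\geq 0$ and $a\geq 0$, so $0\leq c\wedge a\leq a\in X_0$ shows $c\wedge a\in X_0$; since $a_n\leq c\wedge a$ in $X_0$, the defining property of $a=\sup_{X_0}a_n$ yields $a\leq c\wedge a$, i.e.\ $a\leq c$. Thus $a=\sup_n a_n$ and, symmetrically, $b=\sup_n b_n$ already in $X_0+X_1$.

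Finally I would identify $\sup_n f_n=a+b$. Clearly $f_n=a_n+b_n\leq a+b$, so $a+b$ is an upper bound. Conversely, for any upper bound $c$ of $(f_n)$ and any fixed $m$, using $b_n\geq b_m$ for $n\geq m$ together with the monotonicity of $(a_n)$ one gets $a_n\leq c-b_m$ for all $n$, whence $a\leq c-b_m$; letting $m$ vary this reads $b_m\leq c-a$ for all $m$, so $b\leq c-a$, that is $a+b\leq c$. Therefore $a+b=\sup_n f_n$, and $X_0+X_1$ is $\sigma$-Dedekind complete.
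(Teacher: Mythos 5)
Your proof is correct, and it is built on exactly the same decomposition as the paper's: after reducing to a positive increasing sequence bounded by $g=g_0+g_1$ with $g_i\in X_i^+$, both arguments split $f_n$ into $f_n\wedge g_0\in X_0$ and $(f_n-g_0)^+\in X_1$ and take suprema of the two increasing pieces in $X_0$ and $X_1$ respectively. Where you diverge is in the concluding step. The paper invokes the order-convergence machinery of Meyer-Nieberg (an order-bounded increasing sequence order-converges to its supremum, and order limits are additive) to identify $h_0+h_1$ as the order limit of $f_n$ in $X_0+X_1$ and then extracts least-upper-bound-ness from the o-convergence estimate. You instead argue purely with lattice identities: the observation that for any upper bound $c$ in $X_0+X_1$ the element $c\wedge a$ lies back in the ideal $X_0$ and dominates all $a_n$ shows that the supremum computed in the ideal is already the supremum in $X_0+X_1$, and the two-index argument $a_n\le c-b_m$ then gives $a+b\le c$ directly. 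Your route is more elementary and self-contained (no appeal to order limits or their additivity), at the cost of a slightly longer hands-on verification; the paper's route is shorter given the cited propositions and has the side benefit of exhibiting the supremum as an order limit, which the subsequent Remark \ref{rem.dedekind} implicitly exploits when discussing the canonical band projections. Both proofs are valid.
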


\begin{proof}
 Let $(h^n)$ be a sequence in $X_0+X_1$ which is bounded from above by $g\in X_0+X_1$. Replacing $h^n$ by $(h^1 \vee \dots \vee h^n) -h^1$, we may without loss of generality assume that the sequence is positive and increasing. We write $g= g_0 + g_1$ with $g_i\in X_i^+$, and we set $h_0^n := h^n \wedge g_0 = h^n - (h^n -g_0)^+$ and $h_1^n := h^n - h_0^n = (h^n-g_0)^+$. Then the sequences $(h_0^n)$ and $(h_1^n)$ are increasing and, for every $n$, 
 \begin{align*}
  h_0^n & \leq g_0 \text{ and} \\
  h_1^n & \leq (g_0+g_1 -g_0)^+ = g_1 .
 \end{align*}
In particular, since the spaces $X_i$ are order ideals in $X_0+X_1$, $h_i^n \in X_i$ for every $n$. Since the spaces $X_i$ are $\sigma$-Dedekind complete, there exists $h_i := \sup_n h_i^n$ in $X_i$. Moreover, by \cite[Proposition 1.1.10]{MN91}, 
\[
 h_i = o-\lim_{n\to\infty} h_i^n \text{ in } X_i . 
\]
This means that there exist decreasing sequences $(g_i^n)$ in $X_i$ such that $\inf_n g_i^n = 0$ (in $X_i$) and $|h_i^n - h_i| \leq g_i^n$. Again, since the spaces $X_i$ are order ideals in $X_0+X_1$, 
\[
 h_i = o-\lim_{n\to\infty} h_i^n \text{ in } X_0+X_1 . 
\]
Since the order limit is additive by \cite[Proposition 1.1.11]{MN91}, 
\[
 h := h_0 + h_1 = o-\lim_{n\to\infty} (h_0^n +h_1^n) = o-\lim_{n\to\infty} h^n \text{ in } X_0+X_1 ,
\]
that is, there exists a decreasing sequence $(g^n)$ in $X_0+X_1$ such that $\inf_n g^n = 0$ (in $X_0+X_1$) and $|h^n-h| \leq g^n$. In particular, for every $n$,
\[
 0\leq (h^n-h)^+ \leq g^n .
\]
Since the sequence on the left is increasing, and since the sequence on the right is decreasing to $0$, necessarily $h_n\leq h$ for every $n$, that is, $h$ is an upper bound of the sequence $(h^n)$. From $0\leq h-h^n \leq g^n$ follows that $h$ is the least upper bound. We have thus proved that $\{ h_n : n\}$ admits a supremum in $X_0+X_1$, and hence $X_0+X_1$ is $\sigma$-Dedekind complete.
\end{proof}

\begin{remark} \label{rem.dedekind}
By \cite[Proposition 1.2.20, Proposition 1.1.8 (iv)]{MN91}, a Banach lattice is $\sigma$-Dedekind complete if and only if it has the principal projection property, that is, every principal band (a band generated by a single element) is a projection band. In the following theorem we really use the principal projection property of $X_0+X_1$, but we believe that $\sigma$-Dedekind completeness of a Banach lattice is a more common property. If $\vec{X} = (X_0,X_1)$ is an interpolation couple of compatible, $\sigma$-Dedekind complete Banach lattices, then by the previous lemma, $X_0+X_1$ is $\sigma$-Dedekind complete. The canonical, positive projection $P$ onto a principal band generated by an element $f\in X_0+X_1$ is on the positive cone necessarily given by
 \[
  P h = \sup \{ h\wedge n|f| : n\in\N \} \quad (h\in (X_0+X_1)^+) ,
 \]
the supremum being taken in $X_0+X_1$. Since $X_0$ and $X_1$ are $\sigma$-Dedekind complete, and if $h\in X_i^+$, then supremum in the representation of $Ph$ exists also in $X_i$ and coincides with $Ph$. In other words, the canonical projection onto a principal band in $X_0+X_1$ is always admissible, since its restrictions to $X_0$ and $X_1$ coincide with the canonical projections in these spaces. 
\end{remark}

\begin{theorem} \label{lem.sep-k-monotone.exact} 
Let $\vec{X} = (X_0,X_1)$ be an interpolation couple of compatible Banach lattices. Then:
\begin{itemize}
\item[(a)] Every exact partially $K$-monotone intermediate space of $\vec{X}$ is an exact interpolation space for positive, linear operators. 
\item[(b)] If 
\begin{itemize}
 \item[(1)] the couple $\vec{X}$ is an exact Calder\'on-Mityagin couple with respect to positive operators,
 \item[(2)] the spaces $X_0$ and $X_1$ are $\sigma$-Dedekind complete, and
 \item[(3)] for every pair of principal band projections $P$, $Q\in\cL^+ (\vec{X})$ and every pair of contractions $T_1\in\cL^+ (P\vec{X},Q\vec{X})$, $T_2\in\cL^+ ((I-P)\vec{X} ,(I-Q)\vec{X})$ the diagonal operator $T\in\cL (\vec{X})$ given by $T = QT_1P +(I-P)T_2(I-Q)$ is a contraction,
\end{itemize}
 then every exact interpolation space for positive, linear operators is exactly partially $K$-monotone.
\end{itemize}
\end{theorem}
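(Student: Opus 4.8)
The plan is to prove the two assertions separately, using the relation $\ll_K$ as the bridge between positive operators and partial $K$-monotonicity. For part (a), I assume $X$ is exactly partially $K$-monotone and take $S\in\cL^+(\vec X)$ which, after rescaling, I may assume satisfies $\|S\|_{\cL(\vec X)}\le 1$. The goal is to show $Sf\ll_K f$ for every $f\in X$, since then exact partial $K$-monotonicity immediately gives $Sf\in X$ and $\|Sf\|_X\le\|f\|_X$, and undoing the rescaling yields $\|S\|_{\cL(X)}\le\|S\|_{\cL(\vec X)}$. The first step is a monotonicity estimate for the $K$-functional under positive contractions: for $h\ge 0$, any positive decomposition $h=g_0+g_1$ with $g_i\in X_i^+$ (available by Lemma \ref{lem.basic}(c)) is mapped by $S$ to a positive decomposition $Sh=Sg_0+Sg_1$, so that $K(Sh,t)\le\|Sg_0\|_{X_0}+t\,\|Sg_1\|_{X_1}\le\|g_0\|_{X_0}+t\,\|g_1\|_{X_1}$; taking the infimum gives $Sh\preceq_K h$. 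The second step uses positivity of $S$: from $Sf=Sf^+-Sf^-$ with both terms positive one gets $(Sf)^+\le Sf^+$ and $(Sf)^-\le Sf^-$, whence by Lemma \ref{lem.basic}(c) and transitivity of $\preceq_K$ we obtain $(Sf)^\pm\preceq_K Sf^\pm\preceq_K f^\pm$, that is, $Sf\ll_K f$.

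For part (b), I assume $X$ is an exact interpolation space for positive linear operators and that (1)--(3) hold; let $f\in X$ and $g\in X_0+X_1$ with $g\ll_K f$, so that $g^+\preceq_K f^+$ and $g^-\preceq_K f^-$. The strategy is to manufacture a single admissible positive contraction $T\in\cL^+(\vec X)$ with $Tf=g$; then, since $X$ is exact for positive operators, $g=Tf\in X$ and $\|g\|_X\le\|T\|_{\cL(\vec X)}\,\|f\|_X\le\|f\|_X$, which is exactly partial $K$-monotonicity with constant $1$. To build $T$: by the exact Calder\'on--Mityagin property (1), choose positive contractions $T^+,T^-\in\cL^+(\vec X)$ with $T^+f^+=g^+$ and $T^-f^-=g^-$. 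Let $P$ and $Q$ be the canonical band projections onto the principal bands generated by $f^+$ and by $g^+$; by hypothesis (2) together with Lemma \ref{lem.dedekind} and Remark \ref{rem.dedekind} these are admissible positive contractions. Since $f^+\perp f^-$ one has $Pf^+=f^+$, $Pf^-=0$, and likewise $Qg^+=g^+$, $Qg^-=0$. I then set $T_1:=QT^+$ as an operator $P\vec X\to Q\vec X$ and $T_2:=(I-Q)T^-$ as an operator $(I-P)\vec X\to(I-Q)\vec X$; both are positive contractions of the respective sub-couples. By condition (3) the diagonal operator $T=QT_1P+(I-Q)T_2(I-P)$ is then a positive contraction in $\cL^+(\vec X)$, and a direct computation with the projection identities gives $Tf^+=g^+$ and $Tf^-=g^-$, hence $Tf=g$.

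The main obstacle is precisely the construction in part (b): the Calder\'on--Mityagin property only produces the operators $T^+,T^-$ matching the positive and negative parts \emph{separately}, and these must be spliced into a single \emph{linear} operator that is simultaneously admissible, positive, \emph{and} contractive on both endpoint spaces. Disjointness of $f^+$ and $f^-$ lets the band projections $P$ and $I-P$ separate the two pieces cleanly, but controlling the norm of the combined operator is exactly what forces the technical hypothesis (3); the step I would treat most carefully is verifying that $T_1$ and $T_2$ are genuine contractions of the sub-couples $P\vec X\to Q\vec X$ and $(I-P)\vec X\to(I-Q)\vec X$, so that (3) indeed applies. Checking admissibility of the band projections via the $\sigma$-Dedekind completeness assumption (2) and Lemma \ref{lem.dedekind}/Remark \ref{rem.dedekind} is routine but essential, since without it the diagonal operator $T$ need not map each $X_i$ into itself and would fail to lie in $\cL^+(\vec X)$.
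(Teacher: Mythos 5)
Your proposal is correct and follows essentially the same route as the paper: part (a) reduces to showing $Sf\ll_K \|S\|_{\cL(\vec X)}\,f$ via $(Sf)^\pm\leq S(f^\pm)$ and Lemma \ref{lem.basic}(c), and part (b) splices the two operators from the Calder\'on--Mityagin property into the diagonal operator $T=QT^+P+(I-Q)T^-(I-P)$ using the admissible band projections onto the bands generated by $f^+$ and $g^+$. The points you flag as delicate (contractivity of the restricted operators so that hypothesis (3) applies, and admissibility of the band projections via $\sigma$-Dedekind completeness) are exactly the ones the paper handles.
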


\begin{proof}
 (a) Let $X$ be an exact partially $K$-monotone intermediate space, and let $S\in\cL^+ (\vec{X})$, $f\in X$. Then $0\leq (Sf)^+ \leq S(f^+)$ and therefore, by Lemma \ref{lem.basic} (c), for every $t\in (0,\infty )$,  
\begin{align*}
 K((Sf)^+,t) & = \inf \{ \| g_0\|_{X_0} + t \, \| g_1\|_{X_1} : g_i\in X_i \text{ and } (Sf)^+ = g_0 + g_1 \} \\
  & \leq \inf \{ \| g_0\|_{X_0} + t \, \| g_1\|_{X_1} : g_i\in X_i \text{ and } S (f^+) = g_0 + g_1 \} \\
  & \leq \inf \{ \| Sf_0 \|_{X_0} + t\, \| Sf_1\|_{X_1} : f_i\in X_i \text{ and } f^+= f_0 +f_1 \} \\
  & \leq \| S\|_{\cL (\vec{X})} \, \inf \{ \| f_0\|_{X_0} + t\, \| f_1\|_{X_1} : f_i \in X_i \text{ and } f^+ = f_0 +f_1\} \\
  & = \| S\|_{\cL (\vec{X})} \, K(f^+,t) .
\end{align*}
Similarly, one shows that $K((Sf)^-,t) \leq \| S\|_{\cL (\vec{X})} \, K(f^-,t)$. As a consequence, $Sf \ll \| S\|_{\cL (\vec{X})} \, f$.  Since $X$ is exactly partially $K$-monotone, this implies $Sf\in X$ and $\| Sf\|_X \leq  \| S\|_{\cL (\vec{X})} \, \| f\|_X$. Since $S$ and $f$ were arbitrary, this proves that $X$ is an exact interpolation space for positive, linear operators. 

(b) Assume now in addition that the interpolation couple $\vec{X}$ satisfies the conditions (1)--(3). Let $f\in X$ and $g\in X_0+X_1$ such that $g\ll_K f$, that is, $K(g^\pm , \cdot ) \leq K(f^\pm ,\cdot )$. Since $\vec{X}$ is an exact Calder\'on-Mityagin couple with respect to positive operators (assumption (1)), there exist admissible, positive, linear operators $T^+$, $T^-\in\cL^+ (\vec{X} )$ such that $\| T^\pm\|_{\cL (\vec{X})} \leq 1$ and $T^+f^+ = g^+$ and $T^-f^-=g^-$. Since $X_0$ and $X_1$ are $\sigma$-Dedekind complete (assumption (2)), and by Lemma \ref{lem.dedekind}, the space $X_0+X_1$ is $\sigma$-Dedekind complete, and therefore, by \cite[Proposition 1.2.11, p. 17]{MN91}, it has the principal projection property. By the proof of Lemma \ref{lem.dedekind} (see also Remark \ref{rem.dedekind}), the projections $P$ and $Q$ onto the bands generated by $f^+$ and $g^+$, respectively, are admissible, positive, linear contractions.  Note that $P f^+ = f^+$, $(I-P)f^- = f^-$, and similarly $Q g^+ = g^+$, $(I-Q)g^- = g^-$.
 We now set $T = Q T^+ P + (I-Q) T^- (I-P)$. Then $T$ is admissible, positive, linear and $Tf = Q^+ T^+ f^+ - Q^- T^-f^- = g^+ - g^- = g$. Moreover, by assumption (3), $T$ is a contraction. Since $X$ is an exact interpolation space for positive, linear operators, this implies $g\in X$ and $\| g\|_X \leq \| f\|_X$. Since $f$ and $g$ were arbitrary, this proves that $X$ is exactly partially $K$-monotone.
\end{proof}

\begin{remark} \label{rem.cm}
The assumption (3) from Theorem \ref{lem.sep-k-monotone.exact} is for example satisfied if there exist lattice norms $N_0$ and $N_1$ on $\R^2$ such that for every principal band projection $P\in\cL^+ (\vec{X})$ and for every $f\in X_i$ 
\[
 \| f\|_{X_i} = N_i (\| Pf\|_{X_i} , \| (I-P)f\|_{X_i} ) \quad (i=0,\, 1) .
\]
This condition is for example satisfied in $L^p$-spaces ($1\leq p\leq \infty$); simply choose $N_i$ as the $p$-norm on $\R^2$. \end{remark}

\begin{examples} \label{ex.calderon.mityagin}
(a) Let $(\Omega ,\mu)$ be a $\sigma$-finite measure space. Then, by the Calder\'on-Ryff theorem (\cite[Theorem 2.10, Corollary 2.11, pp. 114]{BeSh88}), the interpolation couple $(L^1 (\Omega ),L^\infty (\Omega ))$ is an exact Calder\'on-Mityagin couple with respect to positive operators. Moreover, the spaces $L^1$ and $L^\infty$ are $\sigma$-Dedekind complete. Finally, both spaces satisfy assumption (3) of Theorem \ref{lem.sep-k-monotone.exact} (b) (use Remark \ref{rem.cm} above). As a consequence, by Theorem \ref{lem.sep-k-monotone.exact}, the exact interpolation spaces for positive, linear operators and the exact partially $K$-monotone spaces coincide.

(b) Let $(\Omega ,\mu)$ be a measure space, let $w_0$, $w_1$ be two weights on $\Omega$, and let $p_0$, $p_1\in [1,\infty )$. By a result of Sparr \cite[Lemma 4.2]{Sp78} (see also Cwikel \cite[proof of Corollary 2, p. 234]{Cw76} in the case when $p_0\not= p_1$ or Sedaev \cite{Sd73} and Cwikel \cite[Theorem 4', p. 234]{Cw76} in the case $p_0 = p_1$), the interpolation couple $(L^{p_0}_{w_0} (\Omega ),L^{p_1}_{w_1} (\Omega ))$ is an exact interpolation Calder\'on-Mityagin couple with respect to positive operators; actually, for positive $g$, $f$ with $g\preceq_K f$, Lemma 4.2 in \cite{Sp78} only states that there exists an admissible, linear contraction $T$ such that $Tf=g$, but the proof shows that $T$ can be chosen to be positive. Moreover, the spaces $L^{p_0}_{w_0}$ and $L^{p_1}_{w_1}$ are $\sigma$-Dedekind complete. Finally, $L^p$-spaces satisfy assumption (3) of Theorem \ref{lem.sep-k-monotone.exact} (b) (use again Remark \ref{rem.cm}). As a consequence, by Theorem \ref{lem.sep-k-monotone.exact}, the exact interpolation 
spaces for positive, linear operators and the exact partially $K$-monotone spaces coincide; this observation is an analogue to \cite[Corollary 4.1]{Sp78}, which states that in this particular interpolation couple
the exact interpolation spaces for bounded, linear operators coincide with the exact $K$-monotone spaces.  

(c) For further examples of Calder\'on-Mityagin couples of spaces of measurable functions we refer to Cwikel an Keich \cite{CwKe01}: among others, they mention couples of Lorentz spaces \cite[Example 2.5]{CwKe01}, or the couple $(L^1,X)$, where $X$ is a rearrangement invariant Banach function space \cite[Section 5]{CwKe01}; note, however, that the Calder\'on-Mityagin property is hidden behind the property of exact monotonicity which is defined via optimal decomposability of functions with respect to the $K$-functional. Especially the fact that the mentioned interpolation couples are exact Calder\'on-Mityagin couples with respect to positive operators has to be checked. See also results by Astashkin \cite{As02} or Astashkin, Maligranda and Tikhomirov \cite{AsMaTi13}.
\end{examples}

\section{Interpolation of positive Gagliardo-Peetre operators, order preserving sublinear operators and order preserving Lipschitz operators}

Let $\vec{X}=(X_0,X_1)$ and $\vec{Y}=(Y_0,Y_1)$ be two interpolation couples of compatible Banach lattices. In this section we study interpolation of possibly nonlinear operators. We call an operator $S:X_0+X_1\supseteq\dom{S}\to Y_0+Y_1$, defined on a nonempty domain $\dom{S}\subseteq X_0+X_1$, {\em positive Gagliardo-Peetre operator} if
\begin{align*}
 & \exists C\geq 0 \, \forall f\in\dom{S} \, \forall f_i \in X_i \text{ with } f= f_0+f_1 \, \forall \varepsilon >0 \, \exists g_i\in Y_i \\
 & Sf = g_0+g_1 \text{ and } \| g^\pm_i \|_{Y_i} \leq C\, \| f^\pm_i \|_{X_i} + \varepsilon .
\end{align*}
The least possible constant $C\geq 0$ in the above definition is denoted by $\| S\|_{\GP^+}$. The set of all positive Gagliardo-Peetre operators from $\vec{X}$ into $\vec{Y}$ is denoted by $\GP^+ (\vec{X},\vec{Y})$.

The following lemma gives two characterisations and two necessary conditions for an operator to be a positive Gagliardo-Peetre operator. Assertion (b) is a slight adaptation of \cite[Proposition 4.1.3, p. 494]{BrKr91}; see also \cite[Proposition 2.1, p. 106]{BeSh88}.

\begin{lemma} \label{lem.properties.gpplus}
Let $\vec{X} = (X_0,X_1)$ and $\vec{Y} = (Y_0,Y_1)$ be two interpolation couples of compatible Banach lattices, and let $S:X_0+X_1\supseteq\dom{S}\to Y_0+Y_1$ be a (possibly nonlinear) operator. The following are true:
\begin{itemize}
\item[(a)] The operator $S$ is a positive Gagliardo-Peetre operator if and only if 
\begin{align*}
 & \exists C\geq 0 \, \forall f\in\dom{S}\, \forall f_i^\pm \in X_i^+ \text{ with } \, f^\pm= f_0^\pm+f_1^\pm \, \forall \varepsilon >0 \exists g_i^\pm\in Y_i^+ \\
 & (Sf)^\pm \leq g_0^\pm +g_1^\pm \text{ and } \| g^\pm_i \|_{Y_i} \leq C\, \| f^\pm_i \|_{X_i} + \varepsilon .
\end{align*}
The equivalence remains true if $(Sf)^\pm \leq \dots$ is replaced by $(Sf)^\pm = \dots$.
\item[(b)] The operator $S$ is a positive Gagliardo-Peetre operator with $\| S\|_{\GP^+} \leq C$ if and only if, for every $f\in\dom{S}$,
 \[
  Sf \ll_K C \, f . 
 \]
\item[(c)] If $S$ is a positive Gagliardo-Peetre operator, then, for every $f\in\dom{S}$, 
\begin{align*}
 f\geq 0 \quad & \Rightarrow \quad Sf\geq 0 , \quad \text{and} \\
 f\leq 0 \quad & \Rightarrow \quad Sf \leq 0 .
\end{align*}
In particular, if $0\in\dom{S}$, then $S0=0$.  
\item[(d)] If $S$ is a positive Gagliardo-Peetre operator, then, for $C=2\, \| S\|_{\GP^+}$, 
\begin{equation} \label{eq.gp}
\begin{split}
 & \forall f\in\dom{S} \, \forall f_i \in X_i \text{ with } f= f_0+f_1 \, \forall \varepsilon >0 \, \exists g_i\in Y_i \\
 & Sf = g_0+g_1 \text{ and } \| g_i \|_{Y_i} \leq C \, \| f_i \|_{X_i} + \varepsilon .
\end{split}
\end{equation}
\end{itemize}
\end{lemma}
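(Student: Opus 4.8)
The plan is to establish the four assertions in the order (a), (b), (c), (d): assertion (b) is read off from the reformulation in (a), assertion (c) is an immediate consequence of (b), and (d) follows directly from the definition. Two tools are used throughout. The first is Lemma \ref{lem.basic}: part (a) lets me pass between an arbitrary decomposition $f = f_0 + f_1$ and positive decompositions $f^\pm = \hat{f}_0^\pm + \hat{f}_1^\pm$ with $0 \le \hat{f}_i^\pm \le f_i^\pm$, while part (c) computes the $K$-functional through positive decompositions and gives $|g| \le |f| \Rightarrow g \preceq_K f$. The second is the Riesz decomposition property of the Banach lattice $Y_0 + Y_1$: if $0 \le w \le u_0 + u_1$ with $u_i \ge 0$, then $w = w_0 + w_1$ with $0 \le w_i \le u_i$ (take $w_0 = w \wedge u_0$); since $Y_0, Y_1$ are ideals in $Y_0 + Y_1$, the pieces $w_i$ automatically lie in $Y_i$ when the $u_i$ do.

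For (a) I would prove GP $\Rightarrow$ ($\le$)-version, ($\le$) $\Rightarrow$ ($=$), ($=$) $\Rightarrow$ ($\le$) (trivial), and ($\le$) $\Rightarrow$ GP, all with the same constant $C$. For GP $\Rightarrow$ ($\le$), given positive decompositions $f^\pm = f_0^\pm + f_1^\pm$ I set $\tilde{f}_i := f_i^+ - f_i^- \in X_i$; this decomposes $f$, its parts obey $(\tilde{f}_i)^\pm \le f_i^\pm$, and the definition produces $Sf = \gamma_0 + \gamma_1$ with $\|\gamma_i^\pm\|_{Y_i} \le C\|(\tilde{f}_i)^\pm\|_{X_i} + \varepsilon \le C\|f_i^\pm\|_{X_i} + \varepsilon$; the parts $h_i^\pm := \gamma_i^\pm \in Y_i^+$ then dominate $(Sf)^\pm \le h_0^\pm + h_1^\pm$, which is the ($\le$)-version. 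Applying the Riesz decomposition property to such a domination splits $(Sf)^\pm$ exactly below the $h_i^\pm$ and yields the ($=$)-version, the reverse implication being trivial. Conversely, for ($\le$) $\Rightarrow$ GP I start from an arbitrary $f = f_0 + f_1$, replace it by the positive decompositions $f^\pm = \hat{f}_0^\pm + \hat{f}_1^\pm$ with $\hat{f}_i^\pm \le (f_i)^\pm$ from Lemma \ref{lem.basic}(a), apply ($\le$) to obtain dominating $h_i^\pm$, split $(Sf)^+ = a_0 + a_1$ and $(Sf)^- = b_0 + b_1$ below the respective $h_i^\pm$, and recombine $g_i := a_i - b_i \in Y_i$; then $Sf = g_0 + g_1$ and $(g_i)^\pm \le h_i^\pm$ give the norm bounds in the definition.

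For (b), the forward implication is a matter of taking infima: from the ($=$)-version of (a), for any positive decomposition $f^+ = f_0^+ + f_1^+$ one obtains a positive decomposition of $(Sf)^+$ with $\|g_i^+\|_{Y_i} \le C\|f_i^+\|_{X_i} + \varepsilon$, whence $K((Sf)^+, t) \le C K(f^+, t)$ after taking the infimum (Lemma \ref{lem.basic}(c)) and letting $\varepsilon \to 0$; together with the analogous bound for the negative parts this is exactly $Sf \ll_K Cf$. The converse is the main obstacle: from $K((Sf)^+, t) \le C K(f^+, t) \le C\|f_0^+\|_{X_0} + tC\|f_1^+\|_{X_1}$ for all $t$ I must produce, for one fixed decomposition, pieces of $(Sf)^+$ with both coordinates controlled simultaneously, and selecting a single value of $t$ loses a factor. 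The remedy is the Gagliardo--Peetre lemma (\cite[Proposition 4.1.3, p. 494]{BrKr91}, \cite[Proposition 2.1, p. 106]{BeSh88}), equivalently the observation that the set of attainable norm-pairs $\{(\|b_0\|_{Y_0}, \|b_1\|_{Y_1}) : (Sf)^+ = b_0 + b_1\}$ has convex, upward-closed closure whose supporting half-planes are precisely $x + ty \ge K((Sf)^+, t)$; the hypothesis places $(C\|f_0^+\|_{X_0}, C\|f_1^+\|_{X_1})$ in this closure, yielding the desired decomposition up to $\varepsilon$. I would then positivise the pieces via Lemma \ref{lem.basic}(a), arriving at the ($=$)-version of (a) with constant $C$, hence at GP with $\|S\|_{\GP^+} \le C$.

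Finally, (c) follows from (b): if $f \ge 0$ then $f^- = 0$, so $Sf \ll_K Cf$ forces $K((Sf)^-, \cdot) \le C K(0, \cdot) = 0$; evaluating at $t = 1$ and using $K(h, 1) = \|h\|_{Y_0 + Y_1}$ gives $(Sf)^- = 0$, i.e. $Sf \ge 0$, and symmetrically $f \le 0 \Rightarrow Sf \le 0$; applying both to $0 \in \dom{S}$ gives $S0 = 0$. For (d) no interpolation machinery is needed: with $C_0 = \|S\|_{\GP^+}$ and any decomposition $f = f_0 + f_1$, the definition (applied with error $\varepsilon/2$) gives $Sf = g_0 + g_1$ with $\|g_i^\pm\|_{Y_i} \le C_0 \|f_i^\pm\|_{X_i} + \varepsilon/2$, and then $\|g_i\|_{Y_i} \le \|g_i^+\|_{Y_i} + \|g_i^-\|_{Y_i} \le C_0(\|f_i^+\|_{X_i} + \|f_i^-\|_{X_i}) + \varepsilon \le 2C_0 \|f_i\|_{X_i} + \varepsilon$, using $\|f_i^\pm\|_{X_i} \le \|\,|f_i|\,\|_{X_i} = \|f_i\|_{X_i}$; this is \eqref{eq.gp} with $C = 2\|S\|_{\GP^+}$, the factor $2$ being exactly what the statement predicts.
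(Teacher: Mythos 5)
Your proof is correct, and its overall architecture matches the paper's: Lemma \ref{lem.basic} is the tool for passing between arbitrary decompositions of $f$ and positive decompositions of $f^\pm$ in (a), a Gagliardo--Peetre ($K$-divisibility) argument handles the converse of (b), and (d) is the triangle inequality $\|g_i\|_{Y_i}\leq\|g_i^+\|_{Y_i}+\|g_i^-\|_{Y_i}$ combined with $\|f_i^\pm\|_{X_i}\leq\|f_i\|_{X_i}$. The places where you diverge are minor but worth recording. For the converse of (a), the paper only asserts that a dominating decomposition $(Sf)^\pm\leq g_0^\pm+g_1^\pm$ can be replaced by an exact one with smaller norms and dismisses this as ``an exercise''; your explicit appeal to the Riesz decomposition property (take $w_0=w\wedge u_0$, $w_1=w-w_0$, the pieces landing in $Y_i$ because $Y_0$ and $Y_1$ are ideals in $Y_0+Y_1$), followed by recombining $g_i=a_i-b_i$ with $g_i^\pm\leq h_i^\pm$, is precisely the intended solution and actually closes the loop back to the original definition more carefully than the paper does. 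For the converse of (b), the paper gives a self-contained argument via the equivalent functional $K_\infty$, choosing $t=\|f_0^+\|_{X_0}/\|f_1^+\|_{X_1}$ and treating the degenerate cases $f_0^+=0$ and $f_1^+=0$ separately, whereas you invoke the Gagliardo-diagram statement of \cite[Proposition 4.1.3]{BrKr91} as a black box; both are legitimate (the paper itself cites that proposition as the source of assertion (b)), the inline version buying self-containedness and yours brevity, and your subsequent positivisation of the resulting pieces via Lemma \ref{lem.basic}(a) is sound since $((Sf)^+)^-=0$ forces the corrected pieces to be positive. Finally, you derive (c) from (b) through $K((Sf)^-,1)=\|(Sf)^-\|_{Y_0+Y_1}=0$, while the paper derives it from (a) by estimating $\|(Sf)^-\|_{Y_0+Y_1}\leq 2\varepsilon$; the two are equally short and equally valid.
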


\begin{proof}
\noindent (a) ``$\Rightarrow$'' Let $S$ be a positive Gagliardo-Peetre operator. Let $f\in\dom{S}$, and let $f_i \in X_i$ be such that $f=f_0+f_1$. By Lemma \ref{lem.basic} (a), that is, by replacing $f_i$ by $\hat{f}_i$, if necessary, we may without loss of generality assume that $f^\pm = f_0^\pm + f_1^\pm$. In this procedure, the norms $\| f_i^\pm\|_{X_i}$ become even smaller, that is, $\| \hat{f}_i^\pm \|_{X_i}\leq \| f_i^\pm\|_{X_i}$. For this choice of $f_i$ and for every $\varepsilon >0$ there exists $g_i\in Y_i$ such that $Sf = g_0+g_1$ and $\| g_i^\pm\|_{Y_i} \leq C\, \| f_i^\pm\|_{X_i} + \varepsilon$. Repeating the same procedure as above with the functions $g_i$, that is, by applying Lemma \ref{lem.basic} (a) again, we may assume that $(Sf)^\pm = g_0^\pm + g_1^\pm$. Since again the norms $\| g_i^\pm\|_{Y_i}$ become smaller in this procedure, the norm estimate between $g_i^\pm$ and $f_i^\pm$ remains true. We have thus proved the implication ``$\Rightarrow$''. 

``$\Leftarrow$'' In order to prove the converse implication, it suffices to note that if we find $g_i^\pm\in Y_i$ such that $(Sf)^\pm \leq g_0^\pm + g_1^\pm$, then we can also find $\tilde{g}_i^\pm\in Y_i$ such that $\| \tilde{g}_i^\pm\|_{Y_i} \leq \| g_i^\pm\|_{Y_i}$ and $(Sf)^\pm = g_0^\pm + g_1^\pm$. This is an exercise.

(b)  ``$\Rightarrow$'' Assume that $S\in \GP^+ (\vec{X},\vec{Y})$ with $\| S\|_{\GP^+}\leq C$. Let $f\in\dom{S}$ and choose $f_i\in X_i$ such that $f^\pm = f_0^\pm + f_1^\pm$ (compare with Lemma \ref{lem.basic} (a)). By definition of a positive Gagliardo-Peetre operator, for every $\varepsilon >0$ there exist $g_0\in Y_0$ and $g_1\in Y_1$ such that $Sf = g_0 + g_1$ and $\| g^\pm_i \|_{Y_i} \leq C\, \| f^\pm_i \|_{X_i} + \varepsilon$. Then $(Sf)^+ \leq g_0^+ +g_1^+$ and hence, for every $t\in (0,\infty )$,
 \begin{align*}
  K((Sf)^+ ,t) & \leq K(g_0^+ + g_1^+ ,t) \\
   & \leq \| g_0^+\|_{Y_0} + t\, \| g_1^+ \|_{Y_1} \\
   & \leq C\, (\| f_0^+\|_{X_0} + t\, \| f_1^+\|_{X_1} ) + (1+t)\, \varepsilon .
 \end{align*}
 Taking the infimum over all possible representations $f = f_0+f_1$ (satisfying $f^+ =  f_0^+ + f_1^+$) and over $\varepsilon >0$, this implies
 \[
  K((Sf)^+ , t) \leq C\, K(f^+ ,t) .
 \]
 Similarly, one proves $K((Sf)^- ,\cdot ) \leq C\, K(f^-,\cdot )$. Hence, $Sf \ll_K C \, f$.
 
 ``$\Leftarrow$'' For the converse, fix $f\in\dom{S}$. Following \cite[Corollary 3.1.29]{BrKr91}, we note that $Sf\ll_K C\, f$ is equivalent to $Sf\ll_{K_\infty} C\, f$ in the sense that, for every $t>0$,
\[
 K_\infty((Sf)^\pm ,t) \leq C \, K_\infty(f^\pm,t) ,
\]
where 
\[
K_\infty (f,t) := \inf \{ \max \{ \| f_0\|_{X_0} , t\, \|f_1\|_{X_1}\} : f_i\in X_i \text{ and } f=f_0 +f_1\} 
\]
is a variant of the $K$-functional. Fix $\varepsilon >0$ and $f^+_i\in X_i^+$ with $f^+=f_0^+ + f_1^+$. 

Assume first that $f_i^+\neq 0$. Then, for $t := \|f^+_0\|_{X_0}/\|f^+_1\|_{X_1} \in (0,\infty )$ there exist $g_i^+\in Y_i^+$ such that $(Sf)^+ =g_0^+ + g_1^+$ and 
\begin{align*}
   \max \{ \| g_0^+\|_{Y_0},  t\, \| g_1^+ \|_{Y_1} \} & \leq  C \, \max \{ \| f_0^+\|_{X_0}, t\, \| f_1^+ \|_{X_1} \} +\varepsilon \\
   & = C\, \| f_0^+\|_{X_0} + \varepsilon \\
   & = C\, t\, \| f_1^+\|_{X_1} + \varepsilon.
\end{align*}
Thus, 
\[
 \|g_0^+\|_{Y_0} \leq C\, \| f_0^+\|_{X_0} + \varepsilon \text{ and } \|g_1^+\|_{Y_1} \leq C\, \|f_1^+\|_{X_1} + \frac{\varepsilon}{t} .
\]
Assume next that $f_0^+=0$. Then, for every $t>0$ there exist $g_i^+\in Y_i^+$ such that $(Sf)^+ =g_0^+ + g_1^+$ and 
\[
  \max \{ \| g_0^+\|_{Y_0},  t\, \| g_1^+ \|_{Y_1} \} \leq  C t\, \| f_1^+ \|_{X_1} + \varepsilon .
\]
Therefore, 
\[
 \|g_0^+\|_{Y_0} \leq C\, t\, \| f_0^+\|_{X_0} + \varepsilon \text{ and } \|g_1^+\|_{Y_1} \leq C\, \|f_1^+\|_{X_1} + \frac{\varepsilon}{t} .
\]
Similarly one proceeds in the case when $f^+_1=0$. Since in the first case, $\varepsilon >0$ is arbitrary, and since in the second and third case, $t>0$ and $\varepsilon >0$ are arbitrary (in particular, one may choose $t\leq 1$ or $t\geq 1$), we conclude that for every $\varepsilon >0$ and every $f_i^+\in X_i^+$ with $f^+ = f_0^+ + f_1^+$ there exist $g_i\in Y_i^+$ such that $(Sf)^+ =g_0^+ + g_1^+$ and $\| g_i^+\|_{Y_i}\leq C\, \| f_i^+\|_{X_i} + \varepsilon$. Similarly one proceeds for estimating $(Sf)^-$. As a consequence, $S$ is a positive Gagliardo-Peetre operator.

(c) Let $f\in\dom{S}$ be such that $f\geq 0$. Then $f= f_0 +f_1$ for some $f_i \in X_i^+$. By the characterisation from (a), for every $\varepsilon >0$ there exists $g_i^-\in Y_i^+$ such that $0\leq (Sf)^- \leq g_0^- + g_1^-$ and $\| g_i^-\|_{Y_i} \leq C\, \| f_i^-\|_{X_i} + \varepsilon = \varepsilon$. In particular, $\| (Sf)^- \|_{Y_0+Y_1} \leq 2\varepsilon$. Since $\varepsilon >0$ is arbitrary, this yields $(Sf)^- = 0$, and hence $Sf\geq 0$. Similarly, one proceeds if $f\in\dom{S}$ and $f\leq 0$. 

(d) Let $f_i\in X_i$ such that $f=f_0+f_1\in\dom{S}$, and let $\varepsilon >0$. Let $g_i\in Y_i$ be as in the definition of a positive Gagliardo-Peetre operator. Then $Sf = g_0+g_1$ and 
\begin{align*}
 \| g_i\|_{Y_i} & \leq \| g_i^+\|_{Y_i} + \| g_i^-\|_{Y_i} \\
  & \leq \| S\|_{\GP^+} \, (\| f_i^+\|_{X_i} + \| f_i^-\|_{X_i} ) + 2\varepsilon \\
  & \leq 2\, \| S\|_{\GP^+} \, \| f_i\|_{X_i} + 2\varepsilon . 
\end{align*}
\end{proof}

Given two interpolation couples $\vec{X} = (X_0,X_1)$ and $\vec{Y} = (Y_0,Y_1)$ of Banach spaces we call an operator $S:X_0+X_1\supseteq\dom{S}\to Y_0+Y_1$ a {\em Gagliardo-Peetre operator} if it satisfies the condition \eqref{eq.gp} from Lemma \ref{lem.properties.gpplus} (d) for some constant $C\geq 0$. The least possible constant $C\geq 0$ in \eqref{eq.gp} is denoted by $\| S\|_{\GP}$. The set of all Gagliardo-Peetre operators from $\vec{X}$ into $\vec{Y}$ is denoted by $\GP (\vec{X}, \vec{Y})$. Compare our definition of Gagliardo-Peetre operators with \cite[Definition 4.1.1, p. 493]{BrKr91}, where Gagliardo-Peetre operators are necessarily defined on $\dom{S} = X_0+X_1$. We consider here also Gagliardo-Peetre operators on possibly smaller domains. By Lemma \ref{lem.properties.gpplus} (d), every positive Gagliardo-Peetre operator between interpolation couples of Banach lattices is a  Gagliardo-Peetre operator and $\| S\|_{\GP} \leq 2\, \| S\|_{\GP^+}$, but the converse is not true: in fact, every 
admissible, bounded, 
linear operator is a Gagliardo-Peetre operator (also in the sense of \cite{BrKr91}), but it is not necessarily positive as it should be by Lemma \ref{lem.properties.gpplus} (c). 

\begin{remark}
 Lemma \ref{lem.properties.gpplus} (c) is a justification for the notion of {\em positive} Gagliardo-Peetre operator. Every admissible, positive, linear operator is a positive Gagliardo-Peetre operator; compare also with Lemma \ref{lem.lipschitz.gp} below. Nevertheless, the notion of a positive operator might be ambiguous in the nonlinear setting. Every sublinear operator $S:X_0+X_1\supseteq \dom{S} \rightarrow X_0+X_1$ which is bounded in $X_i$ (in the sense that $\| Sf\|_{X_i} \leq C\, \| f\|_{X_i}$ for some constant $C\geq 0$ and every $f\in X_i$) is a classical Gagliardo-Peetre operator. However, even if it maps into the positive cone of $X_0+X_1$, it is not a positive Gagliardo-Peetre operator if $\dom{S}$ contains a negative vector. For example, the Hardy-Littlewood maximal operator $M$ is a classical Gagliardo-Peetre operator on the interpolation couple $(L^p (\R^N), L^\infty (\R^N))$, and $Mf\geq 0$ for every $f\in L^p+L^\infty$, but $M$ is not a positive Gagliardo-Peetre operator.
\end{remark}

\begin{lemma} \label{lem.properties.gp}
 Let $\vec{X} = (X_0,X_1)$ and $\vec{Y} = (Y_0,Y_1)$ be two interpolation couples of compatible Banach lattices, and let $S: X_0+X_1\supseteq \dom{S} \to Y_0+Y_1$ be an operator. Then: 
 \begin{itemize}
  \item[(a)] The operator $S$ is a Gagliardo-Peetre operator with $\| S\|_{\GP} \leq C$ if and only if
  \begin{align*}
 & \forall f\in \dom{S}\, \forall f_i \in X_i^+ \textrm{ with } |f|= f_0+f_1 \, \forall \varepsilon >0 \exists g_i\in Y_i^+ \\
 & |Sf| \leq g_0+g_1 \text{ and } \| g_i \|_{Y_i} \leq C\, \| f_i \|_{X_i} + \varepsilon \quad (i=0,1).
 \end{align*}
 \item[(b)] The operator $S$ is a Gagliardo-Peetre operator with $\| S\|_{\GP} \leq C$ if and only if, for every $f\in\dom{S}$, 
 \[
  Sf \preceq_K C \, f. 
 \]
 \item[(c)] If $S$ is a Gagliardo-Peetre operator such that $\dom{S}$ is a (not necessarily linear) lattice and $0\in \dom{S}$, and if $S$ is order preserving, 
 then $S$ is a positive Gagliardo-Peetre operator and $\| S\|_{\GP^+} \leq \| S\|_{\GP}$. 
 \item[(d)] If $S$ is a positive Gagliardo-Peetre operator and if $g\ll_K f$ implies $g\preceq_K f$ for every $g\in Y_0+Y_1$, $f\in X_0+X_1$, then $S$ is a Gagliardo-Peetre operator and $\| S\|_{\GP}\leq \|S\|_{\GP^+}$. In particular, if $\vec X$ is an exact Calder\'on-Mityagin couple with respect to positive operators, then each positive Gagliardo-Peetre operator is a Gagliardo-Peetre operator with $\| S\|_{\GP}\leq \|S\|_{\GP^+}$. 
 \end{itemize}
\end{lemma}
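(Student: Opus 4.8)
The plan is to prove the four assertions in order, reducing everything to the lattice decomposition facts of Lemma \ref{lem.basic} and to the characterisations of positive Gagliardo-Peetre operators in Lemma \ref{lem.properties.gpplus}. For part (a) I would argue by two implications, mirroring Lemma \ref{lem.properties.gpplus}(a). For ``$\Rightarrow$'': given $f_i\in X_i^+$ with $|f|=f_0+f_1$, use Lemma \ref{lem.basic}(b) to lift this to a decomposition $f=\tilde f_0+\tilde f_1$ with $|\tilde f_i|=f_i$, so $\|\tilde f_i\|_{X_i}=\|f_i\|_{X_i}$; feed this into \eqref{eq.gp} to get $Sf=h_0+h_1$ with $\|h_i\|_{Y_i}\le C\|f_i\|_{X_i}+\varepsilon$, and put $g_i:=|h_i|\in Y_i^+$, noting $|Sf|\le|h_0|+|h_1|=g_0+g_1$. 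For ``$\Leftarrow$'': start from an arbitrary decomposition $f=f_0+f_1$; by Lemma \ref{lem.basic}(a) we may assume $f^\pm=f_0^\pm+f_1^\pm$ without increasing $\|f_i\|_{X_i}$, so $|f|=|f_0|+|f_1|$ is a positive decomposition. Apply the hypothesis to obtain $g_i\in Y_i^+$ with $|Sf|\le g_0+g_1$; truncate by $\tilde g_0:=g_0\wedge|Sf|$ and $\tilde g_1:=(|Sf|-g_0)^+$ to get $|Sf|=\tilde g_0+\tilde g_1$ with $0\le\tilde g_i\le g_i\in Y_i$ (hence $\tilde g_i\in Y_i$, $Y_i$ being an ideal); finally invoke Lemma \ref{lem.basic}(b) to split $Sf=\hat g_0+\hat g_1$ with $|\hat g_i|=\tilde g_i$, which gives the required norm bounds and hence \eqref{eq.gp} with constant $C$.

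For part (b) the forward implication is immediate: for any decomposition $f=f_0+f_1$ and the resulting $Sf=g_0+g_1$ one has $K(Sf,t)\le\|g_0\|_{Y_0}+t\|g_1\|_{Y_1}\le C(\|f_0\|_{X_0}+t\|f_1\|_{X_1})+(1+t)\varepsilon$; taking the infimum over decompositions and letting $\varepsilon\to0$ yields $K(Sf,t)\le C\,K(f,t)=K(Cf,t)$, that is $Sf\preceq_K Cf$. For the converse I would pass, exactly as in the proof of Lemma \ref{lem.properties.gpplus}(b), to the equivalent functional $K_\infty(f,t)=\inf\{\max\{\|f_0\|_{X_0},t\|f_1\|_{X_1}\}:f=f_0+f_1\}$ (using \cite[Corollary 3.1.29]{BrKr91}), so that $Sf\preceq_K Cf$ becomes $K_\infty(Sf,t)\le C\,K_\infty(f,t)$ for all $t$. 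Given $f=f_0+f_1$ and $\varepsilon>0$, I choose $t=\|f_0\|_{X_0}/\|f_1\|_{X_1}$ when both terms are nonzero (and let $t\to0$ or $t\to\infty$ in the two degenerate cases); since the $K_\infty$-infimum is attained up to an error by a single decomposition $Sf=g_0+g_1$ with $\max\{\|g_0\|_{Y_0},t\|g_1\|_{Y_1}\}\le C\,K_\infty(f,t)+\varepsilon'$, the choice of $t$ separates this into the two coordinatewise bounds $\|g_i\|_{Y_i}\le C\|f_i\|_{X_i}+\varepsilon$, establishing \eqref{eq.gp}.

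For part (c) I first note that $S0=0$, since taking $f=0$ with the decomposition $0=0+0$ in \eqref{eq.gp} forces $\|S0\|_{Y_0+Y_1}\le2\varepsilon$ for every $\varepsilon$. As $\dom{S}$ is a lattice containing $0$, we have $f^+=f\vee0\in\dom{S}$ and $-f^-=f\wedge0\in\dom{S}$, and order preservation together with $S0=0$ gives $S(f^+)\ge0$, $S(-f^-)\le0$ and $S(-f^-)\le Sf\le S(f^+)$; hence $(Sf)^+\le S(f^+)$ and $(Sf)^-\le -S(-f^-)$. Now apply \eqref{eq.gp} to $f^+$ and to $-f^-$ along positive decompositions $f^\pm=f_0^\pm+f_1^\pm$: writing $S(f^+)=h_0+h_1$ and $S(-f^-)=k_0+k_1$ with the norm estimates, set $g_i^+:=h_i^+$ and $g_i^-:=k_i^-$. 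Then $g_i^\pm\in Y_i^+$, one has $(Sf)^+\le S(f^+)\le g_0^++g_1^+$ and $(Sf)^-\le -S(-f^-)\le g_0^-+g_1^-$, and $\|g_i^\pm\|_{Y_i}\le C\|f_i^\pm\|_{X_i}+\varepsilon$. By Lemma \ref{lem.properties.gpplus}(a) this exhibits $S$ as a positive Gagliardo-Peetre operator with $\|S\|_{\GP^+}\le C=\|S\|_{\GP}$.

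For part (d) the main statement is a clean chain: if $S$ is positive Gagliardo-Peetre with $C=\|S\|_{\GP^+}$, then Lemma \ref{lem.properties.gpplus}(b) gives $Sf\ll_K Cf$ for every $f\in\dom{S}$, the hypothesis upgrades this to $Sf\preceq_K Cf$, and part (b) then yields that $S$ is Gagliardo-Peetre with $\|S\|_{\GP}\le C$. For the ``in particular'' it remains to verify that an exact Calder\'on-Mityagin couple $\vec X$ (in the applications one has $\vec Y=\vec X$) satisfies $g\ll_K f\Rightarrow g\preceq_K f$. I would reduce this to the merging identity $K(u+v,t)=\sup_{t_1+t_2=t}[K(u,t_1)+K(v,t_2)]$ for disjoint $u,v\ge0$: applied to the pairs $g^+,g^-$ and $f^+,f^-$, together with the monotonicity of the sup-convolution in its data, it converts $K(g^\pm,\cdot)\le K(f^\pm,\cdot)$ into $K(|g|,\cdot)\le K(|f|,\cdot)$, that is $g\preceq_K f$. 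I expect the verification of this merging identity to be the main obstacle: combining optimal decompositions yields the elementary half, whereas the sharp sup-convolution direction is exactly where the band-projection and operator structure of a Calder\'on-Mityagin couple must be used, and where the constant $1$ is earned rather than the free constant $2$ coming from Lemma \ref{lem.wishful}(a) and Lemma \ref{lem.properties.gpplus}(d).
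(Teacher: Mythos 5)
Your proofs of (a), (b), (c) and of the main implication in (d) are correct and essentially follow the paper's route: (a) is the routine reduction via Lemma \ref{lem.basic}(a),(b) (plus the truncation $\tilde g_0=g_0\wedge|Sf|$, $\tilde g_1=(|Sf|-g_0)^+$) that the paper dismisses as ``straightforward''; for (b) you reconstruct, via $K_\infty$ and the choice $t=\|f_0\|_{X_0}/\|f_1\|_{X_1}$, the proof that the paper simply cites from \cite[Proposition 4.1.3, p. 494]{BrKr91} and which it anyway carries out in proving Lemma \ref{lem.properties.gpplus}(b); your (c) differs from the paper's argument only in that you apply \eqref{eq.gp} directly to $f^+$ and $-f^-$ and take $h_i^+$, $k_i^-$, where the paper routes through assertion (a); and the chain positive Gagliardo-Peetre $\Rightarrow Sf\ll_K\|S\|_{\GP^+}f\Rightarrow Sf\preceq_K\|S\|_{\GP^+}f\Rightarrow$ Gagliardo-Peetre is exactly what the paper means by calling (d) a straightforward consequence of Lemma \ref{lem.properties.gpplus}(b) and assertion (b).

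The genuine gap is in the ``in particular'' of (d), and your proposed repair cannot work. The merging identity $K(u+v,t)=\sup_{t_1+t_2=t}\left[K(u,t_1)+K(v,t_2)\right]$ for disjoint $u,v\geq 0$ is a feature of $(L^1,L^\infty)$-type couples, where it follows from $K(f,t)=\int_0^t f^*$ and the additivity of the $L^1$-norm on disjoint positive elements; it is not implied by the Calder\'on-Mityagin property. Concretely, let $X=(\R^3,N)$ with the lattice norm $N(x)=\max\{|x_1|,|x_2|\}+|x_3|$, and consider the couple $(X,X)$, for which $K(h,t)=\min\{1,t\}\,N(h)$. This couple is an exact Calder\'on-Mityagin couple with respect to positive operators: given $0\leq g$ and $0\leq f\neq 0$ with $N(g)\leq N(f)$, put $\varphi(x)=x_1+x_3$ if $f_1\geq f_2$ and $\varphi(x)=x_2+x_3$ otherwise; then $\varphi\geq 0$, $\varphi(f)=N(f)$, $|\varphi(x)|\leq N(x)$, and $Th:=\varphi(h)\,g/N(f)$ is an admissible, positive, rank-one contraction with $Tf=g$. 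Nevertheless, for the standard unit vectors $u=e_1$, $v=e_2$ one has $K(u+v,t)=\min\{1,t\}$ while $\sup_{t_1+t_2=t}\left[K(u,t_1)+K(v,t_2)\right]=\min\{2,t\}$, so your identity fails; worse, for $f=e_1-e_2$ and $g=e_3-e_2$ all four elements $f^\pm$, $g^\pm$ have the same $K$-functional, hence $g\ll_K f$, yet $K(g,\cdot)=2\,K(f,\cdot)$, so $g\not\preceq_K f$. Thus the implication ``$g\ll_K f\Rightarrow g\preceq_K f$'' itself fails in this exact Calder\'on-Mityagin couple, so no argument, along your lines or any other, can derive it from the Calder\'on-Mityagin hypothesis alone.

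Your closing instinct that band projections and operator structure ``must be used'' is right, but that structure is not available from the Calder\'on-Mityagin property by itself: to convert the two contractions $T^{\pm}$ with $T^{\pm}f^{\pm}=g^{\pm}$ into $K(g,\cdot)\leq K(f,\cdot)$ one has to glue them into a single admissible positive contraction $T$ with $Tf=g$, and that is precisely what $\sigma$-Dedekind completeness and assumption (3) of Theorem \ref{lem.sep-k-monotone.exact}(b) are for. Note that the paper itself offers no proof of this ``in particular'' (it declares all of (d) straightforward); the example above shows that, as stated, this part of the lemma really requires the additional hypotheses (2) and (3) of Theorem \ref{lem.sep-k-monotone.exact}(b). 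These hypotheses do hold for the couples in Examples \ref{ex.calderon.mityagin}, and part (d) is not used in the proofs of the paper's main theorems, so the issue is contained; but you should either add those hypotheses to your statement of the ``in particular'' or restrict it to $(L^1,L^\infty)$-type couples where the merging identity is available.
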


\begin{proof} 
Assertion (a) is straightforward, and assertion (b) is \cite[Proposition 4.1.3, p. 494]{BrKr91}; see also \cite[Proposition 2.1, p. 106]{BeSh88}. 

(c) Assume that $S$ is a Gagliardo-Peetre operator such that $\dom{S}$ is a lattice and $0\in\dom{S}$, and that $S$ is order preserving. From the definition of Gagliardo-Peetre operator follows $S0=0$ (compare with Lemma \ref{lem.properties.gpplus} (c)), and since $S$ is order preserving, we deduce  $Sf\geq 0$ (resp. $Sf\leq 0$) whenever $f\geq 0$ (resp. $f\leq 0$). Since $\dom{S}$ is a lattice and $0\in\dom{S}$, $f\in\dom{S}$ implies $f^+,-f^-\in\dom{S}$. Moreover, since $f\leq f^+$, since $S(f^+) \geq 0$, and since $S$ is order preserving, 
\[
(Sf)^+\leq S(f^+) \text{ for every } f\in\dom{S} .
\]
Similarly, since $-f^-\leq f$, since $S(-f^-) \leq 0$, and since $S$ is order preserving, 
\[
(Sf)^-\leq S(-f^-)^-= - S(-f^-) = |S(-f^-)| \text{ for every } f\in \dom{S} . 
\]
If $\varepsilon>0$ and $f\in \dom{S}$, $f^\pm_i\in X^+_i$ with $f^\pm = f^\pm_0 + f^\pm_1$, then the existence of vectors $g^+_i\in Y^+_i$ (respectively, $g^-_i\in Y^+_i$) as in the characterisation of positive Gagliardo-Peetre operators from Lemma \ref{lem.properties.gpplus} (a) follows from the assumption that $S$ is a Gagliardo-Peetre operator and from the characterisation of Gagliardo-Peetre operators from assertion (a) above applied to the functions $f^+\in\dom{S}$ (respectively, $-f^-\in\dom{S}$) and their decompositions $f^\pm = f_0^\pm + f_1^\pm$.  By Lemma \ref{lem.properties.gpplus} (a), $S$ is a positive Gagliardo-Peetre operator and $\|S\|_{\GP^+}\leq \|S\|_{\GP}$.

Assertion (d) is a straightforward consequence of the characterisations of (positive) Gagliardo-Peetre operators from Lemma \ref{lem.properties.gpplus} (b) and from assertion (b) above. 
\end{proof}

\begin{remark}\label{rem1}
 In the context of Lemma \ref{lem.properties.gp} (d) it should be noted that there are interpolation couples of compatible Banach lattices $\vec X$ which are not Calder\'on-Mityagin couples with respect to positive operators but the implication ``$g\ll_Kf$ implies $g\preceq_Kf$'' holds for every $f$, $g\in X_0+X_1$. An example is given by $\vec X := (L^1(\bT), \cC(\bT))$. It is known that $\vec X$ is not a Calder\'on-Mityagin couple, and therefore it is not a Calder\'on-Mityagin couple with respect to positive operators; see \cite{Cw76} and Remark \ref{rem2} below. On the other hand, the $K$-functionals with respect to the couple $(L^1(\bT), \cC(\bT))$ and the couple $(L^1(\bT), L^\infty(\bT)))$ coincide, and the implication ``$g\ll_Kf$ implies $g\preceq_Kf$'' is true in $(L^1(\bT),L^\infty(\bT))$, and thus also in $(L^1(\bT), \cC(\bT))$.
\end{remark}

In the following, we consider two classes of positive Gagliardo-Peetre operators, namely order preserving subadditive operators and order preserving Lipschitz operators. The computations in both cases are very similar. 

\begin{lemma} \label{lem.subadditive.gp}
Let $\vec{X} = (X_0,X_1)$ and $\vec{Y} = (Y_0,Y_1)$ be two interpolation couples of compatible Banach lattices, and let $S: X_0+X_1 \to Y_0+Y_1$ be order preserving, {\em subadditive} in the sense that 
\begin{align*}
 & S(f+g) \leq Sf + Sg  \text{ for every } f, \, g\in X_0+X_1 , \text{ or} \\
 & |S(f+g)| \leq |Sf| + |Sg|  \text{ for every } f, \, g\in X_0+X_1 ,
\end{align*}
and admissible in the sense that there exists a constant $C\geq 0$ such that, for every $f\in X_i$, 
\[
 Sf\in Y_i \text{ and } \| Sf\|_{Y_i} \leq C\, \| f\|_{X_i} .
\]
Then $S$ is a positive Gagliardo-Peetre operator and $\| S\|_{\GP^+} \leq C$. 
\end{lemma}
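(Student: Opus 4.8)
The plan is to verify the characterisation of positive Gagliardo-Peetre operators in Lemma~\ref{lem.properties.gpplus}~(a): it suffices to produce, for every $f\in X_0+X_1$ and every decomposition $f^\pm = f_0^\pm + f_1^\pm$ with $f_i^\pm\in X_i^+$, vectors $g_i^\pm\in Y_i^+$ satisfying $(Sf)^\pm \le g_0^\pm + g_1^\pm$ and $\|g_i^\pm\|_{Y_i}\le C\,\|f_i^\pm\|_{X_i}$. First I would record two preliminary facts valid in either subadditivity case. Applying admissibility to $0\in X_0\cap X_1$ gives $\|S0\|_{Y_i}\le C\,\|0\|_{X_i}=0$, hence $S0=0$; together with order preservation this forces $Sh\ge 0$ when $h\ge 0$ and $Sh\le 0$ when $h\le 0$. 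Then, exactly as in the proof of Lemma~\ref{lem.properties.gp}~(c), the relations $f\le f^+$ and $-f^-\le f$ combined with order preservation and this sign information yield the pointwise bounds
\[
 (Sf)^+ \le S(f^+) \quad\text{and}\quad (Sf)^- \le |S(-f^-)| ,
\]
which reduce the whole problem to decomposing the images of the positive elements $f^+$ and $f^-$.

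For the positive part the two hypotheses behave identically. Decomposing $f^+=f_0^++f_1^+$ and setting $g_i^+:=S(f_i^+)$, positivity of $f_i^+$ gives $g_i^+\ge 0$, and admissibility gives $\|g_i^+\|_{Y_i}\le C\,\|f_i^+\|_{X_i}$. In the case $S(f+g)\le Sf+Sg$ subadditivity yields directly $S(f^+)\le S(f_0^+)+S(f_1^+)=g_0^++g_1^+$; in the case $|S(f+g)|\le |Sf|+|Sg|$ I would use that $S(f^+)=|S(f^+)|$ (since $f^+\ge 0$) and $|S(f_i^+)|=S(f_i^+)$, so that $S(f^+)\le |S(f_0^+)|+|S(f_1^+)|=g_0^++g_1^+$. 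Either way $(Sf)^+\le g_0^++g_1^+$ with the required norm bound.

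For the negative part the case $|S(f+g)|\le |Sf|+|Sg|$ is again direct: decomposing $-f^-=(-f_0^-)+(-f_1^-)$ and putting $g_i^-:=|S(-f_i^-)|\ge 0$, subadditivity gives $|S(-f^-)|\le |S(-f_0^-)|+|S(-f_1^-)|=g_0^-+g_1^-$, while admissibility gives $\|g_i^-\|_{Y_i}\le C\,\|{-f_i^-}\|_{X_i}=C\,\|f_i^-\|_{X_i}$. The one genuinely delicate point, and the main obstacle, is the negative part in the case $S(f+g)\le Sf+Sg$: here applying subadditivity to $-f^-=(-f_0^-)+(-f_1^-)$ only produces a \emph{lower} bound for $|S(-f^-)|$, which is useless for the estimate I need. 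The remedy is to route back to a positive element using $S0=0$: from $0=S(f^-+(-f^-))\le S(f^-)+S(-f^-)$ one gets $-S(-f^-)\le S(f^-)$, that is $|S(-f^-)|\le S(f^-)$. Now $f^-\ge 0$ is decomposed as $f^-=f_0^-+f_1^-$, and setting $g_i^-:=S(f_i^-)\ge 0$ subadditivity gives $|S(-f^-)|\le S(f^-)\le g_0^-+g_1^-$ with $\|g_i^-\|_{Y_i}\le C\,\|f_i^-\|_{X_i}$. In all cases the constructed $g_i^\pm$ meet the requirements of Lemma~\ref{lem.properties.gpplus}~(a) (with any $\varepsilon>0$), whence $S$ is a positive Gagliardo-Peetre operator with $\|S\|_{\GP^+}\le C$.
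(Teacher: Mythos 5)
Your proof is correct and follows essentially the same strategy as the paper's: reduce to the characterisation in Lemma~\ref{lem.properties.gpplus}~(a) via the bounds $(Sf)^+\le S(f^+)$ and $(Sf)^-\le -S(-f^-)$, and then decompose the images of $f^+$ and $-f^-$ using subadditivity, order preservation and admissibility, with $g_i^+ := S(f_i^+)$ exactly as in the paper. The one point where you genuinely diverge is the negative part under the hypothesis $S(f+g)\le Sf+Sg$: the paper applies subadditivity to the rearranged identity $-f_0^- = -f^- + f_1^-$ to obtain $-S(-f^-)\le -S(-f_0^-)+S(f_1^-)$ directly, whereas you first pass to $-S(-f^-)\le S(f^-)$ via $0=S0=S\bigl(f^- +(-f^-)\bigr)\le S(f^-)+S(-f^-)$ and then decompose $S(f^-)\le S(f_0^-)+S(f_1^-)$. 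Both tricks are equally short and yield the same constant $C$; yours has the mild advantage of treating the positive and negative parts symmetrically (all the $g_i^\pm$ are images of positive elements, so their positivity and norm bounds are immediate), and you also write out the case $|S(f+g)|\le|Sf|+|Sg|$, which the paper only declares to be ``very similar.''
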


\begin{proof}
We assume that $S: X_0+X_1 \to Y_0+Y_1$ is admissible, order preserving, and subadditive in the sense that $S(f+g) \leq Sf + Sg$  for every $f$, $g\in X_0+X_1$. The arguments in the other definition of subadditivity are very similar. 

Let $f\in X_0+X_1$ and $f_i\in X_i$ be such that $f^\pm = f_0^\pm + f_1^\pm$. Then, since $S$ is order preserving and subadditive,
 \[
  0 \leq (Sf)^+ \leq S(f^+) \leq S(f_0^+) + S(f_1^+) =: g_0^+ + g_1^+ ,
 \]
with 
\begin{align*}
 \| g_0^+ \|_{X_0} & \leq C \, \| f_0^+\|_{X_0} \quad \text{and} \\
 \| g_1^+ \|_{X_1} & \leq C \, \| f_1^+\|_{X_1} .
\end{align*}
Similarly, the equality $-f_0^- = -f^- + f_1^-$ and subadditivity yield 
\[
 0 \leq (Sf)^- \leq -S(-f^-) \leq - S(-f_0^-) + S(f_1^-) =: g_0^- + g_1^- ,  
\]
with 
\begin{align*}
 \| g_0^- \|_{X_0} & \leq C \, \| f_0^-\|_{X_0} \quad \text{and} \\
 \| g_1^- \|_{X_1} & \leq C \, \| f_1^-\|_{X_1} .
\end{align*}
By Lemma \ref{lem.properties.gpplus} (a), this proves that $S$ is a positive Gagliardo-Peetre operator and $\| \tilde{S}\|_{\GP^+} \leq C$. 
\end{proof}

An {\em admissible Lipschitz operator} between two interpolation couples $\vec{X} = (X_0,X_1)$ and $\vec{Y} = (Y_0,Y_1)$ of Banach spaces is an operator $S:X_0+X_1\supseteq\dom{S}\to Y_0+Y_1$ for which there exists a constant $C\geq 0$ such that, for every $f$, $\hat{f}\in\dom{S}$, $i\in \{ 0,1\}$,
\[
 \| Sf-S\hat{f} \|_{Y_i} \leq C\, \| f-\hat{f}\|_{X_i} ; 
\]
\noindent here, we interpret the right-hand side as $\infty$ if $f-\hat{f}\not\in X_i$, and accordingly the left-hand side being finite means that $Sf-S\hat{f}\in Y_i$. The least possible constant $C\geq 0$ such that the above inequality holds is denoted by $\| S\|_\Lip$. The set of all admissible Lipschitz operators between $\vec{X}$ and $\vec{Y}$ is denoted by $\Lip  (\vec{X} , \vec{Y})$. Moreover, we set $\Lip_0 (\vec{X},\vec{Y}) := \{ S\in\Lip (\vec{X},\vec{Y}) : 0\in\dom{S} \text{ and } S0=0\}$, and we call the operators in $\Lip_0$ {\em normalised, admissible Lipschitz operators}. If $\vec{X}$ and $\vec{Y}$ are interpolation couples of compatible Banach lattices, we further denote by $\Lip^+ (\vec{X} ,\vec{Y} )$ (resp. $\Lip_0^+ (\vec{X},\vec{Y})$) the set of all (normalised) admissible, order preserving Lipschitz operators. Here, we call a nonlinear operator $S$ {\em order preserving} if $Sg\leq Sf$ whenever $g\leq f$.

\begin{remark}[Renormalisation] \label{rem.renormalisation}
Let $S: X_0 +X_1 \supseteq \dom{S}\to Y_0 + Y_1$ be any operator such that $\dom{S}\not=\emptyset$. Choosing $\hat{f}\in\dom{S}$ and defining $\tilde{S} : X_0+X_1\supseteq\dom{\tilde{S}}\to Y_0+Y_1$ by $\tilde{S} f = S (f+\hat{f}) - S\hat{f}$, we obtain an operator $\tilde{S}$ satisfying $0\in\dom{\tilde{S}}$ and $\tilde{S}0=0$. Moreover, $\tilde{S}$ is an admissible Lipschitz operator if and only if $S$ is an admissible Lipschitz operator. If $S$ is an admissible Lipschitz operator, then $\tilde{S} (X_0\cap\dom{S}) \subseteq X_0$ and $\tilde{S}(X_1\cap\dom{S})\subseteq X_1$ and, for every $f\in\dom{\tilde{S}}$, 
\begin{equation} \label{eq.lip.norm}
 \| \tilde{S} f\|_{X_0} \leq C\, \| f\|_{X_0} \text{ and } \| \tilde{S} f\|_{X_1}\leq C\, \| f\|_{X_1} 
\end{equation}
for $C= \| S\|_\Lip$. Conversely, if for every $\hat{f}\in\dom{S}$ the renormalised operator $\tilde{S}$ is admissible and satisfies \eqref{eq.lip.norm} (with a constant $C\geq 0$ independent of $\hat{f}$), then $S$ is an admissible Lipschitz operator and $\| S\|_\Lip\leq C$. By this renormalisation, it is therefore no loss of generality if we restrict in the following to operators $S$ satisfying $0\in\dom{S}$ and $S0=0$, at least in the proofs. We finally remark that if $\vec{X} = (X_0,X_1)$ and $\vec{Y}=(Y_0,Y_1)$ are interpolation couples of compatible Banach lattices, then $\tilde{S}$ is order preserving if and only if $S$ is order preserving. \\
\end{remark}

We call a subset $A$ of an ordered Banach space $X$ {\em solid} if $g\leq h\leq f$ for $f$, $g\in A$ and $h\in X$ implies $h\in A$. 

\begin{lemma} \label{lem.lipschitz.gp}
Let $\vec{X} = (X_0,X_1)$ and $\vec{Y} = (Y_0,Y_1)$ be two interpolation couples of compatible Banach lattices, and let $S\in\Lip^+ (\vec{X} ,\vec{Y})$ be an admissible, order preserving Lipschitz operator such that $\dom{S}$ is a solid lattice. Then, for every $f$, $\hat{f}\in\dom{S}$, 
\[
 Sf - S\hat{f} \ll_K \| S\|_\Lip \, (f-\hat{f}).
\]
Moreover, as a consequence,
\[
\cL^+(\vec X, \vec Y)\subseteq \{ S\in \Lip_0^+ (\vec{X} , \vec{Y}) : \dom{S} \text{ is a solid lattice}\} \subseteq \GP^+(\vec X, \vec Y) ,
\]
and, for every $S\in\Lip_0^+ (\vec{X} ,\vec{Y})$ such that $\dom{S}$ is a solid lattice, 
\[
\|S\|_{\GP^+}\leq \|S\|_{\Lip (\vec X, \vec Y)} . 
\]
\end{lemma}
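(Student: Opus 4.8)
The plan is to prove the main $K$-functional estimate first, since the two inclusions and the norm bound follow from it together with earlier results. I would fix $f,\hat f\in\dom{S}$ and, by the renormalisation remark (Remark \ref{rem.renormalisation}), reduce to estimating $S$ applied to a single argument against $0$: set $\tilde S g = S(g+\hat f)-S\hat f$, which is again order preserving, admissible with the same Lipschitz constant, satisfies $\tilde S 0=0$, and whose domain $\dom{\tilde S}=\dom{S}-\hat f$ is again a solid lattice (solidity and the lattice property being translation invariant). Writing $u=f-\hat f$, the goal becomes $\tilde S u\ll_K \|S\|_\Lip\, u$, i.e. $K((\tilde S u)^\pm,\cdot)\le \|S\|_\Lip\, K(u^\pm,\cdot)$.

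The key step is a pointwise lattice comparison analogous to the one in Lemma \ref{lem.subadditive.gp}. Because $\dom{\tilde S}$ is a solid lattice containing $0$, from $u\in\dom{\tilde S}$ we get $u^+$ and $-u^-$ in $\dom{\tilde S}$ (they are sandwiched between $0$ and $u$ or between $u$ and $0$, and the lattice operations keep them in the domain). Since $\tilde S$ is order preserving with $\tilde S 0=0$, we have $\tilde S(u^+)\ge 0$ and $\tilde S(-u^-)\le 0$; combined with $u\le u^+$ and $-u^-\le u$ this yields the two inequalities
\[
 (\tilde S u)^+ \le \tilde S(u^+) \quad\text{and}\quad (\tilde S u)^- \le -\tilde S(-u^-) .
\]
Now I would use the Lipschitz/admissibility bound in the form \eqref{eq.lip.norm}: for any splitting $u^+=f_0^++f_1^+$ with $f_i^+\in X_i^+$, monotonicity of $\tilde S$ gives $0\le \tilde S(u^+)\le \tilde S(f_0^+)+\tilde S(f_1^+-0)$ after writing $u^+=f_0^++f_1^+$ and exploiting $\tilde S 0=0$; more directly, as in Lemma \ref{lem.subadditive.gp} one sets $g_i^+:=$ the admissible images controlling $\tilde S(f_i^+)$, with $\|g_i^+\|_{Y_i}\le \|S\|_\Lip\,\|f_i^+\|_{X_i}$. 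The subtle point is that Lipschitz operators are not literally subadditive, so I cannot invoke Lemma \ref{lem.subadditive.gp} verbatim; instead the monotone sandwich $(\tilde S u)^+\le \tilde S(u^+)$ together with the Lipschitz estimate $\|\tilde S(f_0^++f_1^+)-\tilde S(f_1^+)\|_{Y_i}\le \|S\|_\Lip\|f_0^+\|_{X_i}$ gives the required decomposition of $(\tilde S u)^+$ with norm control, after invoking the characterisation in Lemma \ref{lem.properties.gpplus} (a).

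From the $K$-estimate, Lemma \ref{lem.properties.gpplus} (b) immediately gives that $\tilde S$, and hence $S$ after undoing the renormalisation, is a positive Gagliardo–Peetre operator with $\|S\|_{\GP^+}\le\|S\|_\Lip$; this is exactly the second displayed inclusion and the final norm inequality. For the first inclusion I would check that every $S\in\cL^+(\vec X,\vec Y)$ lies in $\Lip_0^+(\vec X,\vec Y)$ with solid lattice domain: a positive linear operator is order preserving, satisfies $S0=0$, is automatically bounded (hence admissibly Lipschitz with $\|S\|_\Lip=\|S\|_{\cL(\vec X,\vec Y)}$), and its domain is all of $X_0+X_1$, which is trivially a solid lattice. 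I expect the main obstacle to be the careful bookkeeping in passing from the monotone inequality $(\tilde S u)^\pm\le \pm\tilde S(\pm u^\pm)$ to a genuine two-sided decomposition with the correct endpoint norm bounds, since here Lipschitz continuity must substitute for the subadditivity used in Lemma \ref{lem.subadditive.gp}; everything else is a direct application of the already-established lemmas.
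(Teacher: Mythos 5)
Your proposal is correct and follows essentially the same route as the paper: renormalise via Remark \ref{rem.renormalisation}, use solidity of the domain to get $u^+$, $-u^-$ and the pieces $f_i^\pm$ into $\dom{\tilde S}$, sandwich $(\tilde S u)^\pm$ by $\pm\tilde S(\pm u^\pm)$ via order preservation, and replace subadditivity by the telescoping decomposition $\tilde S(u^+)=\bigl(\tilde S(f_0^++f_1^+)-\tilde S(f_1^+)\bigr)+\tilde S(f_1^+)$ controlled by the Lipschitz constant, before invoking Lemma \ref{lem.properties.gpplus} (a) and (b). This telescoping step, which you correctly identify as the point where Lipschitz continuity substitutes for subadditivity, is exactly the decomposition used in the paper's proof.
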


\begin{proof}
Let $S\in\Lip^+ (\vec{X},\vec{Y})$ be such that $\dom{S}$ is a solid lattice. Fix $\hat{f}\in\dom{S}$, and consider the renormalized operator $\tilde{S}$ defined in Remark \ref{rem.renormalisation}. Then $0\in\dom{\tilde{S}}$ and $\tilde{S} 0 = 0$. Moreover, $\| \tilde{S}\|_{\Lip} = \| S\|_{\Lip}$, $\tilde{S}$ is order preserving and $\dom{\tilde{S}}$ is a solid lattice. By Lemma \ref{lem.properties.gpplus} (b), it suffices to show that $\tilde{S}$ is a positive Gagliardo-Peetre operator and $\|\tilde{S}\|_{\GP^+} \leq \| S\|_\Lip$. Fix $f\in\dom{\tilde{S}}$. Choose $f_i\in X_i$ such that $f^\pm = f_0^\pm + f_1^\pm$ (compare with Lemma \ref{lem.basic} (a)). From the lattice property of $\dom{\tilde{S}}$ and the property $0\in\dom{\tilde{S}}$ it follows that $f^+ = f\vee 0$, $-f^- = f\wedge 0 \in\dom{\tilde{S}}$. Then the assumption that $\dom{\tilde{S}}$ is solid and the inequality $0\leq f_i^+ \leq f^+$ together imply $f_i^+\in\dom{\tilde{S}}$. Hence, since $\tilde{S}$ is order preserving, 
\[
0\leq (\tilde{S}f)^+\leq \tilde{S}(f^+) = (\tilde{S}(f^+_0+f^+_1) - \tilde{S}(f^+_1)) + \tilde{S}(f^+_1)=:g^+_0 + g^+_1,
\]
and since $\tilde{S}$ is an admissible Lipschitz operator,
\begin{align*}
 \| g_0^+ \|_{X_0} & \leq \| S\|_\Lip \, \| f_0^+\|_{X_0} \quad \text{and} \\
 \| g_1^+ \|_{X_1} & \leq \| S\|_\Lip \, \| f_1^+\|_{X_1} .
\end{align*}
Similarly, $-f_i^-\in\dom{\tilde{S}}$ and 
\[
 0\geq -(\tilde{S}f)^- \geq \tilde{S} (-f^-) = (\tilde{S} (-f_0^- - f_1^-) -\tilde{S} (-f_1^-)) + \tilde{S} (-f_1^-) =: - g_0^- - g_1^- ,  
\]
with 
\begin{align*}
 \| g_0^- \|_{X_0} & \leq \| S\|_\Lip \, \| f_0^-\|_{X_0} \quad \text{and} \\
 \| g_1^- \|_{X_1} & \leq \| S\|_\Lip \, \| f_1^-\|_{X_1} .
\end{align*}
By Lemma \ref{lem.properties.gpplus} (a), this proves that $\tilde{S}$ is a positive Gagliardo-Peetre operator and $\| \tilde{S}\|_{\GP^+} \leq \| S\|_\Lip$. 

In particular, we have proved that any normalised, admissible, order preserving Lipschitz operator $S$, such that the domain $\dom{S}$ is a solid lattice, is a positive Gagliardo-Peetre operator and $\| S\|_{\GP^+} \leq \| S\|_\Lip$. Finally, it suffices to note that every admissible, positive, linear operator is Lipschitz continuous, order preserving, and its domain (being the whole space $X_0+X_1$) is a solid lattice.  
\end{proof}

\begin{theorem} \label{thm.gp}
 Let $\vec{X} = (X_0,X_1)$ be an interpolation couple of compatible Banach lattices, and let $X$ be an intermediate space of $\vec{X}$. Consider the following assertions:
\begin{itemize} 
\item[(i)] The space $X$ is an exact partially $K$-monotone space.

\item[(ii)] The space $X$ is an exact interpolation space for positive Gagliardo-Peetre operators on $\vec{X}$, in the sense that for every $S\in \GP^+ (\vec{X})$ and every $f\in X \cap \dom{S}$ one has $Sf\in X$ and $\| Sf\|_X \leq \| S\|_{\GP^+} \, \| f\|_X$. 
 
\item[(iii)] The space $X$ is an exact interpolation space for order preserving Lipschitz operators on $\vec{X}$, for which the domain is a solid lattice, that is, for every $S\in\Lip^+ (\vec{X})$ such that $\dom{S}$ is a solid lattice, and for every $f$, $\hat{f}\in\dom{S}$ one has $\| Sf-S\hat{f}\|_X \leq \| S\|_{\Lip} \|f-\hat{f}\|_X$.  

\item[(iv)] The space $X$ is an exact interpolation space for positive, linear operators on $\vec{X}$.  
\end{itemize}
Then (i)$\Leftrightarrow$(ii)$\Rightarrow$(iii)$\Rightarrow$(iv). If $\vec{X} = (X_0,X_1)$ is an exact Calder\'on-Mityagin couple with respect to positive operators, and if both $X_0$ and $X_1$ are $\sigma$-Dedekind complete and satisfy the condition (3) of Theorem \ref{lem.sep-k-monotone.exact} (b), then all four assertions are equivalent.
\end{theorem}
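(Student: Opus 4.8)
The plan is to assemble the chain of implications from the characterisations already proved, handling the two directions of (i)$\Leftrightarrow$(ii) separately, then following the single line (ii)$\Rightarrow$(iii)$\Rightarrow$(iv), and finally closing the loop under the additional hypotheses.

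First I would prove (i)$\Rightarrow$(ii). Given $S\in\GP^+(\vec X)$ and $f\in X\cap\dom{S}$, Lemma \ref{lem.properties.gpplus}(b) yields $Sf\ll_K \|S\|_{\GP^+}\,f$. Since $\|S\|_{\GP^+}\,f\in X$ and $X$ is exactly partially $K$-monotone, the defining implication applied with $g=Sf$ and $h=\|S\|_{\GP^+}\,f$ gives $Sf\in X$ and $\|Sf\|_X\le \|S\|_{\GP^+}\,\|f\|_X$, which is exactly (ii). For the converse (ii)$\Rightarrow$(i), I would realise the relation $\ll_K$ by a single positive Gagliardo--Peetre operator: given $f\in X$ and $g\in X_0+X_1$ with $g\ll_K f$, define $S$ on the one-point domain $\dom{S}=\{f\}$ by $Sf:=g$. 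By Lemma \ref{lem.properties.gpplus}(b) the relation $g\ll_K f$ means precisely that $S\in\GP^+(\vec X)$ with $\|S\|_{\GP^+}\le 1$. Then (ii) applied to this $S$, noting $f\in X\cap\dom{S}$, gives $g=Sf\in X$ and $\|g\|_X\le\|f\|_X$, so $X$ is exactly partially $K$-monotone.

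Next, for (ii)$\Rightarrow$(iii), I would reduce to a normalised operator and invoke the Lipschitz-to-Gagliardo--Peetre comparison. Let $S\in\Lip^+(\vec X)$ have solid lattice domain and fix $\hat f\in\dom{S}$; the renormalisation $\tilde S u:=S(u+\hat f)-S\hat f$ of Remark \ref{rem.renormalisation} is a normalised, admissible, order preserving Lipschitz operator with solid lattice domain and $\|\tilde S\|_\Lip=\|S\|_\Lip$, hence by Lemma \ref{lem.lipschitz.gp} a positive Gagliardo--Peetre operator with $\|\tilde S\|_{\GP^+}\le\|S\|_\Lip$. Applying (ii) to $\tilde S$ at the point $u=f-\hat f$, which lies in $X\cap\dom{\tilde S}$ whenever $f-\hat f\in X$, and using $\tilde S(f-\hat f)=Sf-S\hat f$ yields $\|Sf-S\hat f\|_X\le\|S\|_\Lip\,\|f-\hat f\|_X$; when $f-\hat f\notin X$ the right-hand side is infinite and nothing is to prove. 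For (iii)$\Rightarrow$(iv) I would simply observe, as recorded in Lemma \ref{lem.lipschitz.gp}, that every $T\in\cL^+(\vec X)$ is an admissible, order preserving Lipschitz operator with solid lattice domain $X_0+X_1$ and $\|T\|_\Lip=\|T\|_{\cL(\vec X)}$; taking $\hat f=0$ in the inequality from (iii) gives $\|Tf\|_X\le\|T\|_{\cL(\vec X)}\,\|f\|_X$ for all $f\in X$, which is exactness for positive, linear operators.

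Finally, to obtain the equivalence of all four assertions under the stated hypotheses, it suffices to close the cycle by showing (iv)$\Rightarrow$(i); but this is precisely the content of Theorem \ref{lem.sep-k-monotone.exact}(b), whose hypotheses (exact Calder\'on--Mityagin couple with respect to positive operators, $\sigma$-Dedekind completeness of $X_0$ and $X_1$, and condition (3)) are exactly those assumed here. Combined with (i)$\Leftrightarrow$(ii) and the line (ii)$\Rightarrow$(iii)$\Rightarrow$(iv), this yields the equivalence of (i)--(iv). I expect no serious obstacle: the substantive work is carried by the earlier lemmas, and the only points requiring care are the one-point-domain construction in (ii)$\Rightarrow$(i) and the convention that the $X$-norm of an element outside $X$ is infinite, so that the inequalities in (iii) and (iv) are read correctly.
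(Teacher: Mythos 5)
Your proposal is correct and follows essentially the same route as the paper: (i)$\Leftrightarrow$(ii) via Lemma \ref{lem.properties.gpplus}(b) (the paper realises $g\ll_K f$ by the everywhere-defined operator sending $f$ to $g$ and everything else to $0$, which is interchangeable with your one-point-domain operator), the implication to (iii) via Lemma \ref{lem.lipschitz.gp}, (iii)$\Rightarrow$(iv) by noting positive linear operators are order preserving Lipschitz with domain $X_0+X_1$, and the closing implication (iv)$\Rightarrow$(i) from Theorem \ref{lem.sep-k-monotone.exact}(b).
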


\begin{proof}
(i)$\Rightarrow$(ii) This implication follows directly from Lemma \ref{lem.properties.gpplus} (b) and the definition of exact partially $K$-monotone spaces. 

(ii)$\Rightarrow$(i) Assume that $X$ is an exact interpolation space for positive Gagliardo-Peetre operators on $\vec{X}$, and let $f\in X$ and $g\in X_0+X_1$ be such that $g\ll_K f$. Consider the operator $S: X_0+X_1 \to X_0+X_1$ given by $Sh := 0$ for $h\not= f$ and $Sf = g$. By Lemma \ref{lem.properties.gpplus} (b), $S$ is a positive Gagliardo-Peetre operator and $\| S\|_{\GP^+} \leq 1$. Since $X$ is an exact interpolation space for positive Gagliardo-Peetre operators on $\vec{X}$, this implies $g\in X$ and $\| g\|_X \leq \| f\|_X$. We have thus proved that $X$ is an exact partially $K$-monotone space.

The implication (i)$\Rightarrow$(iii) follows from Lemma \ref{lem.lipschitz.gp}, while the implication (iii)$\Rightarrow$(iv) follows from the simple observation that every admissible positive, linear operator is order preserving, Lipschitz continuous and everywhere defined on $X_0+X_1$ (hence, its domain is a solid lattice).  

If, in addition, $\vec{X} = (X_0,X_1)$ is an exact Calder\'on-Mityagin couple with respect to positive operators, and if both $X_0$ and $X_1$ are $\sigma$-Dedekind complete and satisfy the condition (3) from Theorem \ref{lem.sep-k-monotone.exact} (b), then the remaining implication (iv)$\Rightarrow$(i) follows from Theorem \ref{lem.sep-k-monotone.exact} (b), and thus all four assertions are equivalent. 
\end{proof}

\begin{remarks} \label{rem2}
(a) There are exact interpolation spaces for bounded, linear operators (in particular, they are exact interpolation spaces for positive, linear operators) which are not exact interpolation spaces for positive Gagliardo-Peetre operators. As a consequence, the implication (iv)$\Rightarrow$(ii) in the previous theorem (or, equivalently, the implication $(iv)\Rightarrow (i)$) is not true in general.

Indeed, consider the interpolation couple $\vec{X} = (X_0,X_1)$ of finite-dimensional Banach lattices $X_0 = X_1 = \R^3$, equipped with the Lorentz norm $\|\cdot \|_v$ and the supremum norm $\|\cdot \|_\infty$, respectively. Here,
\[
\| x\|_v := \sum_{k=1}^3 v_k x_k^* , 
\]
where $x^*$ is the decreasing rearrangement of $x$ and $v_1 \geq v_2 \geq v_3 \geq 0$. By \cite{SeSe71} (see also \cite[Theorem 2]{As02}), if $v_1 = v_2 = 1$ and $v_3 = 0$, then this interpolation couple is not a Calder\'on-Mityagin couple. Hence, there exists an exact interpolation space $X$ for bounded, linear operators which is $K$-monotone but not exactly $K$-monotone \cite[p. 29]{CwNiSc03}. The latter means that there exist $f$, $g\in X$ such that $g\preceq_K f$ and $\|f\|_X < \|g\|_X$. Now, $g\preceq_K f$ is equivalent to $|g|\ll_K |f|$. It is easy to see that in an interpolation couple of compatible Banach {\em function} lattices every exact interpolation space is a Banach lattice itself. In particular, for every $f\in X$,  $|f|\in X$ and $\|f\|_X = \|\,|f|\,\|_X$. In our situation this implies that $|g|\ll_K |f|$ and $\| \,|f| \,\|_X < \|\,|g|\,\|_X$. Therefore, the space $X$ is not exactly partially $K$-monotone. 

(b) There are exact interpolation spaces for everywhere defined Lipschitz operators which are not exact interpolation spaces for positive Gagliardo-Peetre operators, that is, a weaker variant of assertion (iii) (consider only admissible Lipschitz operators with $\dom{S} = X_0+X_1$) does in general not imply assertion (ii). 

Indeed, the first part of the proof of \cite[Theorem 2.5.23; pp. 234]{BrKr91} shows that each exact interpolation space for bounded, linear operators of a finite-dimensional regular interpolation couple is an exact interpolation space for Lipschitz operators. We recall that an interpolation couple $\vec{X} = (X_0,X_1)$ of Banach spaces is {\em regular} if the intersection $X_0\cap X_1$ is dense both in $X_0$ and $X_1$. In the case of finite-dimensional spaces this just means $\dim X_0 = \dim X_1$. Hence, the example from point (a) above serves also as an example in this case of Lipschitz operators.

(c) We do not know  if there are exact interpolation spaces for positive operators which are not exact interpolation spaces for order preserving Lipschitz operators, that is, more precisely, whether the implication (iv)$\Rightarrow$ (iii) is true in general or not.   

(d) As mentioned in the introduction, there are very few results on the interpolation of positive / order preserving operators on Banach lattices. In the case of the special interpolation couple $(L^1,L^\infty )$ (compare with Theorem \ref{thm.main.l1.linfty}), a variant of the equivalence (i)$\Leftrightarrow$(iii) (with a different condition on the domain of the Lipschitz operator $S$, and without the assumption that the measure space is $\sigma$-finite) is contained in B\'enilan and Crandall \cite{BeCr91}. It is basically a consequence of the Brezis-Strauss variant of the Riesz interpolation theorem \cite{BrSt73}; see also the discussion after Corollary \ref{cor.bc1} below. 

(e) The implication (i)$\Rightarrow$(ii) of Theorem \ref{thm.gp} together with Lemma \ref{lem.subadditive.gp} yields a result by Maligranda \cite[Theorem 6]{Mg89b} on interpolation of order preserving, subadditive operators in Banach lattices of measurable functions. In general Banach lattices, this result has been proved by Masty{\l}o \cite[Theorem 2.1]{Mas12}, however, under the stronger condition that the operators in question are positive, order preserving and {\em sublinear}. Here, sublinear means subadditive in the sense that $S(f+g) \leq Sf + Sg$, and in addition $S(\lambda f) = \lambda \, Sf$ for every $f\in X_0+X_1$ and every $\lambda >0$. The latter homogeneity condition can actually be dropped.
\end{remarks}

\section{The interpolation couple $(L^1,L^\infty )$} 

Let $(\Omega ,\mu )$ be a measure space and consider the special interpolation couple $(L^1,L^\infty ) := (L^1 (\Omega ),L^\infty (\Omega ))$. Let ${\mathcal M} = {\mathcal M} (\Omega )$ be the set of the measurable, complex valued functions on $\Omega$, and let ${\mathcal M}^+ = {\mathcal M} (\Omega )^+$ be the cone of real valued, non-negative functions. Denote by $f^*$ the decreasing rearrangement of a measurable function, and by $f^{**}$ the second rearrangement, which is given by $f^{**}(t)  = \frac{1}{t} \int_0^t f^* (s) \; ds$; see \cite[Definition 3.1, p. 52]{BeSh88}. Using the notion of second rearrangement, we introduce the {\em Hardy-Littlewood-Polya relation} $\preceq$ and the {\em B\'enilan-Crandall relation} $\ll$ on $\cM$ by setting, for every $f$, $g\in \cM$,
\begin{align*}
 g\preceq f \quad & :\Leftrightarrow \quad g^{**} \leq f^{**} \text{ pointwise everywhere, and} \\
 g \ll f \quad & : \Leftrightarrow \quad g^+\preceq f^+ \text{ and } g^-\preceq f^- .
\end{align*}
The relation $\ll$ was introduced in B\'enilan \& Crandall \cite{BeCr91}, which is the reason why we call it B\'enilan-Crandall relation. 

An intermediate space $X$ of $(L^1,L^\infty )$ is {\em monotone} if there exists a constant $C\geq 1$ such that, for every $f\in X$ and every $g\in 
L^1+L^\infty$,
 \[
  g\preceq f \quad \Rightarrow \quad g\in X \text{ and } \| g\|_X \leq C\, \| f\|_X ,
 \]
and we call it {\em exactly monotone} if the above implication holds true with $C=1$. Similarly, we say that the space $X$ is {\em partially monotone} if there exists a constant $C\geq 1$ such that, for every $f\in X$ and $g\in L^1 +L^\infty$, 
 \[
  g\ll f \quad \Rightarrow \quad g\in X \text{ and } \| g\|_X \leq C\, \| f\|_X , 
 \] 
and similarly we say that it is {\em exactly partially monotone} if the implication holds true with $C=1$. Exact partially monotone spaces are actually called {\em normal spaces} by B\'enilan and Crandall, but we try to avoid the term ``normal''.\\ 

With these definitions at hand, we are finally in the position to prove Theorem \ref{thm.main.l1.linfty} from the Introduction.

\begin{proof}[Proof of Theorem \ref{thm.main.l1.linfty}]
The equivalence of assertions (ii)--(v) follows from Theorem \ref{thm.gp} in combination with Example \ref{ex.calderon.mityagin} (a). 

If the measure space $(\Omega ,\mu )$ is $\sigma$-finite, as we assume, then, by \cite[Theorem 6.2, p. 74]{BeSh88}, for every $f\in L^1+L^\infty$ and every $t\in (0,\infty )$,  
\[
 f^{**} (t) = \frac{1}{t} K(f,t) .
\]
Hence, in the case of $\sigma$-finite measure spaces, the relations $\preceq$ and $\preceq_K$ coincide on $L^1+L^\infty$.  Similarly, the relations $\ll$ and $\ll_K$ coincide, and hence the class of exact partially monotone spaces coincides with the class of exact partially $K$-monotone spaces. Hence, assertions (i) and (ii) are equivalent.
\end{proof}

\section{Applications}

\subsection{Nonlinear semigroups generated by completely accretive operators}

An operator $A\subseteq X\times X$ on a Banach space $X$ is {\em accretive} if, for every $(u,f)$, $(\hat{u},\hat{f})\in A$ and every $\lambda >0$,
\[
 \| u-\hat{u} + \lambda (f-\hat{f})\|_X \geq \| u - \hat{u}\|_X ,
\]
and it is {\em $m$-accretive} if it is accretive and in addition ${\rm range}\, (I+\lambda A) = X$ for some/all $\lambda >0$. 
By the Crandall-Liggett theorem \cite{CrLi71} (see also \cite[Theorem 4.3, p. 131]{Ba10}), if $A$ is $m$-accretive, then $-A$ generates a nonlinear, strongly continuous contraction semigroup $S = (S_t)_{t\geq 0}$ on $\overline{\dom{A}}$ in the sense that the Cauchy problem 
\[
 \dot u + Au \ni 0 \text{ in } (0,\infty ) , \quad u(0)=u_0 ,
\]
is wellposed, that is, for every $u_0\in\overline{\dom{A}}$ this Cauchy problem admits a unique mild solution. In this case, the orbits of the semigroup $u (t):= S_t u_0$ are the unique mild solutions. 

Now, let $(\Omega ,\mu )$ be a $\sigma$-finite measure space, and assume that $X$ is an intermediate space of the interpolation couple $(L^1 (\Omega ) ,L^\infty (\Omega ))$. We call the semigroup {\em order preserving} if, for every $u$, $v\in X$,
\[
 u\leq v \quad \Rightarrow \quad S_t u \leq S_t v \text{ for every } t\in (0,\infty ), 
\]
and we call it {\em $L^\infty$-contractive} if, for every $u$, $v\in X$,
\[
 \| S_t u - S_t v\|_{L^\infty} \leq \| u-v\|_{L^\infty} \text{ for every } t\in (0,\infty ) ;
\]
as before, we interpret the right-hand side of this inequality as $\infty$ if $u-v\not\in L^\infty (\Omega )$, and the left-hand side being finite means $S_t u -S_tv\in L^\infty (\Omega )$. In a similar way we define {\em $L^1$-contractivity} of the semigroup: simply replace the $L^\infty$-norm by the $L^1$-norm in the above inequality. An operator $A\subseteq X\times X$ is {\em completely $m$-accretive} if it is $m$-accretive and if the semigroup $S$ generated by $-A$ is order preserving, $L^1$-contractive and $L^\infty$-contractive. From our abstract results, we obtain the following corollary. 

\begin{corollary} \label{cor.bc1}
Let $(\Omega ,\mu )$ be a $\sigma$-finite measure space, and let $X$ be an exact partially monotone interpolation space of the interpolation couple $(L^1 (\Omega ) ,L^\infty (\Omega ))$. Let $A\subseteq X\times X$ be a completely $m$-accretive operator, and let $S$ be the semigroup generated by $-A$. Assume that $\overline{\dom{A}}$ is a (not necessarily linear) solid lattice in $L^1 (\Omega ) + L^\infty (\Omega )$. Then $S$ is $X$-contractive for every exact partially monotone space $X$ in the sense that, for every $u$, $v\in X$,
\[
 \| S_t u - S_t v\|_{X} \leq \| u-v\|_{X} \text{ for every } t\in (0,\infty ).
\] 
\end{corollary}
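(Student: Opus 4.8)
The plan is to fix $t\in (0,\infty )$ and to recognise the single map $S_t$ as an admissible, order preserving Lipschitz operator on the couple $(L^1,L^\infty )$ to which the abstract interpolation theory of the previous sections applies; the semigroup structure itself plays no role beyond supplying, for each $t$, one such operator, so the whole corollary reduces to verifying hypotheses for a fixed $S_t$.

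First I would record the structural facts about the endpoint couple. By Example \ref{ex.calderon.mityagin}(a), the couple $(L^1(\Omega ),L^\infty (\Omega ))$ over a $\sigma$-finite measure space is an exact Calder\'on-Mityagin couple with respect to positive operators whose endpoint spaces are $\sigma$-Dedekind complete and satisfy condition (3) of Theorem \ref{lem.sep-k-monotone.exact}(b). Hence Theorem \ref{thm.main.l1.linfty} is in force, and the standing hypothesis that $X$ is an exact partially monotone space (assertion (i) there) is equivalent to its assertion (iv): $X$ is an exact interpolation space for order preserving, Lipschitz continuous operators whose domain is a solid lattice. Spelled out via Theorem \ref{thm.gp}(iii), this means that for every $S\in\Lip^+ (\vec{X})$ with $\dom{S}$ a solid lattice and every $f$, $\hat{f}\in\dom{S}$ one has $\| Sf-S\hat{f}\|_X \leq \| S\|_\Lip \,\| f-\hat{f}\|_X$.

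Next I would check that, for each fixed $t$, the map $S_t\colon \overline{\dom{A}}\to \overline{\dom{A}}$ is precisely such an operator on $\vec{X} = (L^1,L^\infty )$. Its domain is $\overline{\dom{A}}$, a solid lattice by assumption. The hypothesis that $A$ is completely $m$-accretive means exactly that the generated semigroup is order preserving (so $S_t$ is order preserving) and is both $L^1$- and $L^\infty$-contractive; reading these two contractivity properties as the $i=0$ and $i=1$ estimates in the definition of an admissible Lipschitz operator (the finiteness conventions match verbatim) shows $S_t\in\Lip^+ (\vec{X})$ with $\| S_t\|_\Lip \leq 1$. Applying the inequality from Theorem \ref{thm.gp}(iii) to $f=u$, $\hat{f}=v$ then gives, for all $u$, $v\in\overline{\dom{A}}$ (which is contained in $X$, this being the only case in which $S_tu-S_tv$ is defined),
\[
 \| S_t u - S_t v\|_X \leq \| S_t\|_\Lip \,\| u-v\|_X \leq \| u-v\|_X ,
\]
which is the asserted $X$-contractivity for every $t\in (0,\infty )$.

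I expect no genuine obstacle in this argument: all of the analytic difficulty has been absorbed into Theorem \ref{thm.main.l1.linfty}, and the corollary amounts to a dictionary between the hypotheses ``completely $m$-accretive'' and ``solid lattice domain'' on the one hand, and ``admissible, order preserving Lipschitz operator with constant $\leq 1$ and solid lattice domain'' on the other. The only point deserving a word of care is the reading of the contractivity inequality for pairs $u$, $v$ lying in $X$ but outside $\overline{\dom{A}}$; this is handled by the same finiteness conventions already adopted in the definitions of $L^1$- and $L^\infty$-contractivity, under which the inequality is vacuous whenever $S_tu-S_tv$ is not meaningful.
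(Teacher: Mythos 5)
Your argument is correct and is exactly the route the paper intends (the paper gives no written proof, saying only that the corollary follows ``from our abstract results''): for fixed $t$ one checks $S_t\in\Lip^+(\vec X)$ with $\|S_t\|_{\Lip}\le 1$ and solid lattice domain $\overline{\dom{A}}$, and applies the equivalence of exact partially monotone and exact partially $K$-monotone spaces together with Theorem \ref{thm.gp}. The only cosmetic remark is that you invoke slightly more machinery than needed: the direction you use requires only the identification $f^{**}(t)=\frac1t K(f,t)$ from $\sigma$-finiteness (the equivalence (i)$\Leftrightarrow$(ii) of Theorem \ref{thm.main.l1.linfty}) and the unconditional implication (i)$\Rightarrow$(iii) of Theorem \ref{thm.gp}, so the Calder\'on--Mityagin and $\sigma$-Dedekind completeness facts are not actually needed here.
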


It should be noted that Corollary \ref{cor.bc1} is a special case of \cite[Proposition 4.1]{BeCr91} which states that under the assumptions of Corollary \ref{cor.bc1} the semigroup $S$ is {\em completely contractive}. This means that $S$ is not only $X$-contractive for every exact partially monotone interpolation space, but that it is {\em $N$-contractive} for every so-called {\em normal} function $N : L^1 (\Omega ) + L^\infty (\Omega ) \to [0,\infty ]$. The class of normal functions contains all norms $\|\cdot\|_X$ (extended by $\infty$ outside $X$) of exact partially monotone interpolation spaces $X$, but it also contains functions which are not coming from norms. The notion of $N$-contractivity is, however, defined similarly as $X$-contractivity: simply replace the norm in $X$ by the function $N$. The result in Benilan and Crandall is basically a consequence of the Brezis-Strauss variant of Riesz' interpolation theorem \cite[Lemma 3 and Lemma 3$^*$]{BrSt73} (see also \cite[Proposition 2.1]{BeCr91}) which 
admits a rather elementary proof. We recall, however, that our aim was to provide a structural proof leading to interpolation results in general Banach lattices. 

\subsection{Nonlinear semigroups generated by Dirichlet forms}

Let $(\Omega ,\mu )$ be a $\sigma$-finite measure space. The subgradient of a convex, lower semicontinuous, proper function $\E : L^2 (\Omega ) \to \R \cup \{ +\infty\}$, given by 
\begin{align*}
 \partial\E := \{ (u,f) \in L^2 (\Omega ) \times L^2 (\Omega ) : \,\, & u\in\dom{\E} \text{ and for every } v\in L^2 (\Omega ) \\
  & \E (u+v) -\E(u) \geq \langle f , v\rangle_{L^2} \}
\end{align*}
is an $m$-accretive (equivalently, {\em maximal monotone}) operator on $L^2 (\Omega )$ \cite[Exemple 2.3.4]{Br73}, \cite[Theorem 2.8, p. 47]{Ba10}. Here, $\dom{\E} := \{ \E <+\infty\}$ is the {\em effective domain} of $\E$. The negative subgradient therefore generates a semigroup $S = (S_t)_{t\geq 0}$ of nonlinear contractions on $L^2 (\Omega )$ which is strongly continuous on $(0,\infty )$; note carefully that in contrast to the Banach space case, the semigroup is here defined everywhere on $L^2 (\Omega )$, but only for $u\in \overline{\dom{\E}}$ the orbit $t\mapsto S_t u$ is continuous up to $t=0$. 

A convex, lower semicontinuous, proper function $\E$ on $L^2 (\Omega )$ is called {\em Dirichlet form}, if the semigroup $S$ generated by its negative subgradient is order preserving and $L^\infty$-contractive. Barth\'elemy \cite{By96} and Cipriano \& Grillo \cite{CiGr03} have characterized Dirichlet forms intrinsically. In fact, the semigroup $S$ generated by the subgradient of $\E$ is order preserving if and only if, for every $u$, $v\in L^2 (\Omega )$, 
\[
 \E (u\wedge v) + \E (u\vee v) \leq \E (u) + \E (v) ,
\]
and it is $L^\infty$-contractive if and only if, for every $u$, $v\in L^2 (\Omega )$ and for every $\alpha\geq 0$, 
\[
 \E (u-\frac12 ((u-v+\alpha)^+ - (u-v-\alpha)^-)) + \E (v+\frac12 ((u-v+\alpha)^+ - (u-v-\alpha)^-)) \leq \E (u) + \E (v) .
\]
By a duality argument due to B\'enilan \& Picard \cite{BePi79a}, if the semigroup $S$ is $L^\infty$-contractive, then it is also $L^1$-contractive. The following result then follows immediately from Corollary \ref{cor.bc1}. 

\begin{corollary} \label{cor.bc2}
 Let $(\Omega ,\mu )$ be a $\sigma$-finite measure space. Let $\E$ be a Dirichlet form on $L^2 (\Omega )$, and let $S$ be the semigroup generated by its negative subgradient. Then $S$ is $X$-contractive for every exact partially monotone space $X$ in the sense that, for every $u$, $v\in L^2 (\Omega )$,
\[
 \| S_t u - S_t v\|_X \leq \| u-v\|_X \text{ for every } t\in (0,\infty ).
\] 
\end{corollary}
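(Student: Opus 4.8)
The plan is to apply Corollary~\ref{cor.bc1} with the choice $X = L^2(\Omega )$ and $A = \partial\E$. The first point to record is that $L^2(\Omega )$ is indeed an exact partially monotone interpolation space of the couple $(L^1,L^\infty )$: by the Hardy--Littlewood--P\'olya theorem the relation $g\preceq f$ entails $\| g\|_{L^2}\leq \| f\|_{L^2}$, so that $L^2$ is even exactly monotone, and a fortiori exactly partially monotone. Thus the hypotheses of Corollary~\ref{cor.bc1} that concern the ambient space are met, and it remains to fit $A=\partial\E$ into the framework.

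Next I would verify that $\partial\E$ is completely $m$-accretive. That $\partial\E$ is $m$-accretive (maximal monotone) is the standard fact recalled above for subgradients of convex, lower semicontinuous, proper functions. The semigroup $S$ generated by $-\partial\E$ is order preserving and $L^\infty$-contractive precisely because $\E$ is a Dirichlet form, by the Barth\'elemy and Cipriani--Grillo characterisations quoted above, and it is $L^1$-contractive by the B\'enilan--Picard duality argument. Hence $S$ is order preserving, $L^1$- and $L^\infty$-contractive, which is exactly the definition of $\partial\E$ being completely $m$-accretive.

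The remaining, and genuinely delicate, hypothesis is that $\overline{\dom{A}}=\overline{\dom{\E}}$ be a solid lattice in $L^1+L^\infty$, and I expect this to be the main obstacle. Order preservation of $S$ is equivalent to $\E (u\wedge v)+\E (u\vee v)\leq \E (u)+\E (v)$, which forces $\dom{\E}$ to be a sublattice of $L^2$; since the lattice operations are continuous in $L^2$, the closure $\overline{\dom{\E}}$ is again a sublattice. The true difficulty is solidity of this closure, and here I would exploit the Hilbert space nature of the problem: in contrast to the general Crandall--Liggett situation, the subgradient flow is defined everywhere on $L^2(\Omega )$, as noted above, so that for each fixed $t>0$ the map $u\mapsto S_t u$ is an order preserving, admissible Lipschitz operator on $(L^1,L^\infty )$ with $\| S_t\|_{\Lip}\leq 1$ whose domain is all of $L^2(\Omega )$. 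Since $L^2$ is an order ideal in $L^1+L^\infty$, it is trivially a solid lattice, so the solidity requirement is satisfied by working with the everywhere defined semigroup. With this observation the mechanism behind Corollary~\ref{cor.bc1} applies: by Theorem~\ref{thm.main.l1.linfty} an exact partially monotone space $X$ is an exact interpolation space for order preserving Lipschitz operators with solid lattice domain, whence Theorem~\ref{thm.gp}(iii) gives $\| S_t u - S_t v\|_X\leq \| S_t\|_{\Lip}\,\| u-v\|_X\leq \| u-v\|_X$ for all $u$, $v\in L^2(\Omega )$, which is the claimed $X$-contractivity.
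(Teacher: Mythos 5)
Your argument is correct, and it follows the route the paper intends: the paper's entire proof of this corollary is the single sentence that it ``follows immediately from Corollary~\ref{cor.bc1}'', after having recorded (exactly as you do) that the Barth\'elemy/Cipriani--Grillo characterisation gives order preservation and $L^\infty$-contractivity, and that the B\'enilan--Picard duality upgrades this to $L^1$-contractivity, so that $\partial\E$ is completely $m$-accretive. Where you go beyond the paper is in the one place where ``immediately'' is not quite honest: Corollary~\ref{cor.bc1} carries the hypothesis that $\overline{\dom{A}}$ be a solid lattice and, read literally, only yields contractivity on the domain of the semigroup, whereas Corollary~\ref{cor.bc2} makes no assumption on $\dom{\E}$ and asserts the estimate for all $u,v\in L^2(\Omega)$. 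Your resolution --- use that the subgradient flow is defined on all of $L^2(\Omega)$, observe that $L^2(\Omega)$ is an order ideal and hence a solid lattice in $L^1+L^\infty$ in the sense of the paper, and then apply Theorem~\ref{thm.main.l1.linfty}\,(i)$\Rightarrow$(iv) (equivalently Theorem~\ref{thm.gp}\,(iii)) directly to each fixed $S_t\in\Lip^+$ with $\|S_t\|_{\Lip}\leq 1$ and $\dom{S_t}=L^2(\Omega)$ --- is exactly the mechanism behind Corollary~\ref{cor.bc1} and closes this small gap cleanly. The preliminary remark that $L^2$ is itself exactly monotone (via Hardy--Littlewood--P\'olya) is correct but not actually needed once you bypass Corollary~\ref{cor.bc1} and invoke the interpolation theorem for order preserving Lipschitz operators directly; it only matters if one insists on quoting Corollary~\ref{cor.bc1} verbatim with $A\subseteq X\times X$ for $X=L^2$. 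In short: correct, same underlying argument, with a worthwhile extra justification the paper suppresses.
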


A typical example of a Dirichlet form is the energy of the $p$-Laplace operator, say with Neumann boundary conditions. In this case, $\Omega$ is an open subset in $\R^N$, $p\in (1,\infty )$, and, for every $u\in L^2 (\Omega )$,
\[
 \E (u) = \begin{cases}
           \frac{1}{p} \int_\Omega |\nabla u|^p & \text{if } \nabla u\in L^p (\omega ) , \\[2mm]
           \infty & \text{else,}
          \end{cases}
\]
Here, the gradient $\nabla u$ is to be understood in the distributional sense. One easily checks that $\E$ is convex, lower semicontinuous and that the effective domain $\dom{\E}$ is dense in $L^2 (\Omega )$, so that the semigroup $S$ generated by $-\partial\E$ ($=\Delta_p$, the $p$-Laplace operator with Neumann boundary conditions) is strongly continuous up to $t=0$ on the entire space $L^2 (\Omega )$. The fact that $\E$ is a Dirichlet form can be easily checked with the help of the Beurling-Deny type conditions due to Barth\'elemy and Cipriano \& Grillo. The fact that the semigroup $S$ is $X$-contractive for every exact partially monotone interpolation space $X$ of $(L^1(\Omega ),L^\infty (\Omega ))$ (Corollary \ref{cor.bc2}) then means that the associated parabolic problem
\begin{align*}
 \partial_t u - \Delta_p u = 0 & \text{ in } (0,\infty )\times\Omega , \\
 \partial_\nu u = 0 & \text{ in } (0,\infty ) \times\partial\Omega ,
\end{align*}
is wellposed in every exact partially monotone interpolation space and that it generates a contraction semigroup in these spaces.

\def\cprime{$'$} \def\cprime{$'$} \def\cprime{$'$} \def\cprime{$'$}
  \def\cprime{$'$} \def\cprime{$'$} \def\cprime{$'$} \def\cprime{$'$}
  \def\cprime{$'$} \def\cprime{$'$} \def\cprime{$'$} \def\cprime{$'$}
  \def\cprime{$'$} \def\cprime{$'$} \def\cprime{$'$} \def\cprime{$'$}
  \def\cprime{$'$} \def\cprime{$'$} \def\cprime{$'$} \def\cprime{$'$}
  \def\cprime{$'$} \def\cprime{$'$} \def\cprime{$'$} \def\cprime{$'$}
  \def\cprime{$'$} \def\cprime{$'$} \def\cprime{$'$} \def\cprime{$'$}
  \def\cprime{$'$} \def\cprime{$'$} \def\cprime{$'$}
  \def\ocirc#1{\ifmmode\setbox0=\hbox{$#1$}\dimen0=\ht0 \advance\dimen0
  by1pt\rlap{\hbox to\wd0{\hss\raise\dimen0
  \hbox{\hskip.2em$\scriptscriptstyle\circ$}\hss}}#1\else {\accent"17 #1}\fi}
  \def\cprime{$'$} \def\cprime{$'$} \def\cprime{$'$}
\providecommand{\bysame}{\leavevmode\hbox to3em{\hrulefill}\thinspace}
\providecommand{\MR}{\relax\ifhmode\unskip\space\fi MR }
\providecommand{\MRhref}[2]{%
  \href{http://www.ams.org/mathscinet-getitem?mr=#1}{#2}
}
\providecommand{\href}[2]{#2}

\end{document}